\documentclass[reqno]{amsart}

\usepackage{amsmath}
\usepackage{amssymb}
\usepackage{hyperref}
\hypersetup{ colorlinks = true, urlcolor = blue, linkcolor = blue, citecolor = red }
\usepackage[margin=1in]{geometry}
\usepackage{xcolor}
\usepackage{comment}
\usepackage{dsfont}
\usepackage{comment}
\usepackage{cite}

\newcommand{\R}{\mathbb{R}}
\newcommand{\norm}[1]{\left\lVert#1\right\rVert}
\newcommand{\ov}{\overline}
\theoremstyle{plain}
\newtheorem{thm}{Theorem}[section]
\newtheorem{lem}{Lemma}[section]
\newtheorem{prop}{Proposition}[section]
\newtheorem{cor}{Corollary}[section]

\theoremstyle{definition}
\newtheorem{defn}{Definition}[section]

\theoremstyle{remark}

\newtheorem{rmk}{Remark}[section]

\usepackage{hyperref}
\usepackage{enumitem}
\usepackage{nameref}
\makeatletter
\let\orgdescriptionlabel\descriptionlabel
\renewcommand*{\descriptionlabel}[1]{%
  \let\orglabel\label
  \let\label\@gobble
  \phantomsection
  \edef\@currentlabel{#1\unskip}%
  \let\label\orglabel
  \orgdescriptionlabel{#1}%
}
\numberwithin{equation}{section}

\begin{document}

\title[]{$C^{1, \alpha}$ regularity for a class of singular/degenerate fully nonlinear elliptic equations with oblique boundary conditions}
\everymath{\displaystyle}

\author{Sun-Sig Byun}
\address{Department of Mathematical Sciences and Research Institute of Mathematics,
Seoul National University, Seoul 08826, Korea}
\email{byun@snu.ac.kr}

\author{Hongsoo Kim}
\address{Department of Mathematical Sciences, Seoul National University, Seoul 08826, Korea}
\email{rlaghdtn98@snu.ac.kr}

\author{SeungHyun Kim}
\address{Department of Mathematical Sciences, Seoul National University, Seoul 08826, Korea}
\email{seung27788@snu.ac.kr}

\begin{abstract}
In this paper, we establish global $C^{1, \alpha}$ regularity for viscosity solutions to a class of singular and degenerate fully nonlinear elliptic equations subject to oblique boundary conditions. Our work extends the findings in \cite{BKO25} to a broader class of equations, notably encompassing the singular case.
\end{abstract}

\keywords{Singular/degenerate fully nonlinear equations, Global regularity, Oblique boundary conditions}

\subjclass[2020]{Primary 35B65 35D40 35J25; Secondary 35J60, 35J70}

% 35B65 Smoothness and regularity of solutions to PDEs

% 35D40 Viscosity solutions to PDEs

% 35J25 Boundary value problems for second-order elliptic equations 

% 35J60 Nonlinear elliptic equations

% 35J70 Degenerate elliptic equations

% 35J75 Singular elliptic equations
\maketitle

\section{Introduction}
In this paper, we consider global regularity results for viscosity solutions to a class of fully nonlinear elliptic equations of the form
\begin{equation}
\label{eq} \left\{\begin{aligned}
\Phi(x, |Du|)F(D^{2}u) &=f &&\text{in } \Omega, \\
\beta\cdot Du &=g &&\text{on } \partial\Omega,
\end{aligned}\right.
\end{equation}
where $\Omega \subset \R^{n}$ is a bounded $C^{1}$ domain with boundary $\partial\Omega$, $F : \mathcal{S}^{n}\rightarrow \R$ a uniformly elliptic operator in the sense of \ref{a1} and satisfies $F(0)=0$, and $\Phi : \Omega \times (0,\infty)\rightarrow [0,\infty)$ is a continuous function that exhibits singular and degenerate behavior in the gradient, as described in \ref{a2}. Here, $f\in C(\Omega)$, $g\in C^{\alpha}(\partial\Omega)$, and $\beta\in C^{\alpha}(\partial\Omega;\R^{n})$ for some $\alpha\in(0,1)$, with $\beta$ satisfying \ref{a3}. 

We begin by discussing condition \ref{a2}. This condition was introduced in \cite{BBLL24a}, where the interior H\"older continuity of the gradient for viscosity solutions to
\begin{align}
\label{toy model}
\Phi(x, |Du|)F(D^{2}u)=f  \quad \text{ in } B_{1},  
\end{align}
was established under the assumptions that $F$ is a fully nonlinear uniformly elliptic operator and $f\in C(B_{1})\cap L^{\infty}(B_{1})$. We now present some examples satisfying \ref{a2} as follows:
\begin{enumerate}
\item  $\Phi(x,t) = t^{p} $  with $i(\Phi) = s(\Phi)=p$,
where $p>-1$.
\item  $\Phi(x,t) = t^{p(x)} $ with $i(\Phi) = \inf_{x\in\Omega} p(x)$ and 
        $s(\Phi) = \sup_{x\in\Omega} p(x)$,
where $p(\cdot) \in C(\Omega)$ and $\inf_{x\in\Omega} p(x)>-1$.
\item  $\Phi(x,t) = t^{p(x)} + a(x) t^{q(x)}$  with $i(\Phi) = \inf_{x\in\Omega} \left\{p(x),  q(x)\right\}$ and 
        $s(\Phi) = \sup_{x\in\Omega} \left\{p(x),  q(x)\right\}$,
where $p(\cdot),q(\cdot) \in C(\Omega)$, $\inf_{x\in\Omega} \left\{p(x),  q(x)\right\}>-1$ and $0\leq a(\cdot) \in C(\Omega)$.    
    \item $\Phi(x,t) = \phi(t) + a(x) \psi(t)$ where $\phi$ and $\psi$ are suitable $N$-functions and $0\leq a(\cdot) \in C(\Omega)$.
\end{enumerate}

We next turn to boundary regularity results for singular and degenerate fully nonlinear elliptic equations. In \cite{BBLL24b}, the authors established global $C^{1, \alpha}$ regularity for viscosity solutions to the Dirichlet problem associated with \eqref{toy model} on $C^{2}$ domains. For the singular case corresponding to the power-type choice $\Phi(x,t)=t^{\gamma}$ with $-1<\gamma<0$, \cite{P2008} proved global Lipschitz regularity under homogeneous Neumann boundary conditions on $C^{2}$ domains. For the degenerate case $\Phi(x,t)=t^{\gamma}$ with $\gamma>0$, \cite{BV2022} obtained global $C^{1, \alpha}$ regularity under nonhomogeneous Neumann boundary conditions on $C^{2}$ domains. For oblique boundary conditions, global $C^{1, \alpha}$ regularity was established in \cite{BKO25} for the same power-type choice $\Phi(x,t)=t^{\gamma}$ with $\gamma>0$ on $C^{1}$ domains. For further references, including those concerning uniformly elliptic equations, we refer the reader to \cite{R20, BRS26, BDF22, MS2006, LZ2018, BO26} and the references therein.

Our goal in this paper is to establish global $C^{1, \alpha}$ regularity for viscosity solutions to a general class of equations of the form \eqref{eq}, under nonhomogeneous oblique boundary conditions on $C^{1}$ domains. The novelty of our result lies in treating both the degenerate case ($i(\Phi)\geq 0$) and the singular case ($-1<i(\Phi)<0$) within a unified framework. In the degenerate regime, our result builds on \cite{BKO25} and applies to a broader class of nonlinearities beyond the power-type setting. In the singular regime, we improve upon the result of \cite{P2008}, where global Lipschitz regularity was established via the Ishii–Lion's method under homogeneous Neumann boundary conditions. We note that, in this singular regime, even the extension to nonhomogeneous Neumann boundary conditions is not straightforward. 

We now briefly explain the strategy used in the proof. To establish global $C^{1, \alpha}$ regularity, we show that a solution $u$ can be approximated by a sequence of affine functions with an error of order $r^{k(1+\alpha)}$ at each scale $r^k$ (for some fixed $r \in (0,1)$) via an iterative argument. More precisely, we obtain a sequence of affine functions $\left\{l_{k}\right\}$
satisfying 
\begin{align*}
\norm{u-l_{k}}_{L^{\infty}\left(\Omega\cap B_{r^{k}}\right)}\leq r^{k(1+\alpha)}  \qquad \text{ for all }\; k\geq 0. 
\end{align*}
The case $k=1$ follows from a compactness argument. By iterating this procedure—namely, by applying the compactness argument repeatedly to the scaled functions $v_k$ defined by 
\begin{align*}
v_{k}(x):=\frac{(u-l_{k})(r^{k}x)}{r^{k(1+\alpha)}},    
\end{align*}
we obtain the estimate for any $k$. Note that $v_{k}$ does not satisfy the original equation \eqref{eq}. Instead, it solves an equation of the form
\begin{align} \label{eq2}
\widetilde{\Phi}(|Dv_{k}-q|)F(D^{2}v_{k})=\Tilde{f},
\end{align}
where $q=-\nabla l_k/r^{k\alpha}$ and $\widetilde{\Phi}$ satisfies \ref{a2} after an appropriate scaling (see Section \ref{scaling property} for details).  Hence, to implement the compactness argument, we need an equicontinuity estimate for solutions to \eqref{eq2} that remains uniform in $q$.

In \cite{IS2016}, the authors introduced the sliding cusp method to establish H\"older estimates for uniformly elliptic equations that hold only where the gradient is large. In \cite{BKO25}, this result was extended to the oblique boundary setting. To address the difficulties arising from oblique boundary conditions, the authors combined the sliding paraboloid and cusp methods and exploited a modified scaling property adapted to the equation. As a consequence, they obtained H\"older estimates for uniformly elliptic equations that hold only where the gradient is far from some point on $C^{1}$ domains. 

By combining the above result with the structural condition \ref{a2} on $\Phi$, we obtain the desired estimate in the degenerate case. In the singular case, after a suitable transformation (see Lemma \ref{transform} for more details), a term of the form $|Du|^{\gamma}$ with $\gamma \in [0,1)$ appears, which creates a major difficulty in the analysis. More specifically, \eqref{eq2} takes the form
\begin{align*}
    \Phi\left(x, |Dv - q|\right) F(D^{2}v) = f |Dv - q|^{\gamma},
\end{align*}
for an arbitrary $q \in \R^n$. As mentioned before, we need to find a uniform analysis in $q$. However, as $|q|$ becomes very large, the right-hand side may grow indefinitely, hindering the derivation of uniform estimates independent of $q$.
To circumvent this difficulty, we restrict our attention to the regime where $|q|^\gamma\norm{f}_{L^\infty} $ is bounded; in this case, the arguments from \cite{IS2013,BKO25} remain applicable, without resorting to the Ishii-Lion's method.

We now state the main results of this paper.
\begin{thm}
\label{thm1}
Let $\Omega$ be a bounded $C^{1}$ domain and $0\in\partial\Omega$. Assume \ref{a1}--\ref{a3} (see Section~\ref{sec2}). In addition, assume that  $f\in C(\Omega)$, $g\in C^{\alpha}(\partial\Omega)$, and $\beta\in C^{\alpha}(\partial\Omega;\R^{n})$ for some $\alpha\in(0,1)$. 
Let $u$ be a viscosity solution to \eqref{eq}.
Then $u\in C^{1,\ov{\alpha}}(\ov{\Omega\cap B_{1/2}})$ with the estimate
\begin{align*}
\norm{u}_{C^{1,\ov{\alpha}}\left(\ov{\Omega\cap B_{1/2}}\right)}\leq c\left(\norm{u}_{L^{\infty}(\Omega\cap B_{1})}+\norm{f}_{L^{\infty}(\Omega\cap B_{1})}^{1/(1+i(\Phi))}+\norm{g}_{C^{\alpha}(\partial\Omega\cap B_{1})}\right),    
\end{align*}
where 
\begin{equation}
\label{alpha range}
\ov{\alpha}\in 
\begin{cases}
(0,\alpha_{0})\cap\left(0, \frac{1}{1+s(\Phi)}\right]\cap(0, \alpha]  \quad &\text{if } i(\Phi)\geq0, \\
(0,\alpha_{0})\cap\left(0, \frac{1}{1+s(\Phi)-i(\Phi)}\right]\cap(0,\alpha] \quad &\text{if } -1<i(\Phi)<0,
\end{cases}    
\end{equation}
$c=c(n, \lambda, \Lambda, i(\Phi), M, \nu_{0}, \delta_{0},[\beta]_{C^{\alpha}(\partial\Omega\cap B_{1})}, C^{1} \text{ modulus of }\partial\Omega\cap B_{1}, \ov{\alpha})$, and $\alpha_{0}$ will be specified in  Remark~\ref{optimal exponent}.
\end{thm}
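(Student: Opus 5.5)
We follow the iteration strategy sketched in the introduction and reduce to a normalized small-data situation. First we flatten nothing but normalize. By the scaling property of Section~\ref{scaling property}, replacing $u$ by $u/K$ with $K=\norm{u}_{L^\infty}+\norm{f}_{L^\infty}^{1/(1+i(\Phi))}+\norm{g}_{C^\alpha}$ and zooming into a small ball gives a function $v$ solving an equation $\widetilde\Phi(x,|Dv|)F(D^2v)=\tilde f$ with $\widetilde\Phi$ still satisfying \ref{a2} with the same $i,s,M$. We may then assume $\norm{v}_{L^\infty}\le1$, $\norm{\tilde f}_{L^\infty}\le\varepsilon$, $[g]_{C^\alpha}\le\varepsilon$, $|g(0)|\le 1$, $[\beta]_{C^\alpha}\le\varepsilon$, and that the $C^1$ modulus of $\partial\Omega\cap B_1$ is at most $\varepsilon$, with $\varepsilon$ to be chosen.

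In the singular case $-1<i(\Phi)<0$ we first apply Lemma~\ref{transform}. This rewrites the equation as $\Phi_0(x,|Dv|)F(D^2v)=f_0|Dv|^{\gamma}$ with $\gamma=-i(\Phi)\in(0,1)$ and $\Phi_0$ degenerate, i.e.\ $i(\Phi_0)\ge0$ and $s(\Phi_0)=s(\Phi)-i(\Phi)$. This accounts for the exponent $1/(1+s(\Phi)-i(\Phi))$ in \eqref{alpha range}.

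The core is a one-step improvement-of-flatness lemma. There are $r\in(0,1/2)$ and $\varepsilon>0$ such that the following holds. Suppose $w$ solves $\widetilde\Phi(x,|Dw-q|)F(D^2w)=\tilde f\,|Dw-q|^{\gamma}$ in $\Omega\cap B_1$ (with $\gamma=0$ in the degenerate case) and $\beta\cdot Dw=\tilde g$ on $\partial\Omega\cap B_1$, with $\norm{w}_\infty\le1$, the data $\varepsilon$-small as above, and, in the singular case, $|q|^{\gamma}\norm{\tilde f}_\infty\le\varepsilon$. Then some affine $l$ with $|l(0)|+|Dl|\le C$ and $\beta(0)\cdot Dl=\tilde g(0)$ satisfies $\norm{w-l}_{L^\infty(\Omega\cap B_r)}\le r^{1+\bar\alpha}$.

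We prove the lemma by contradiction and compactness. The needed equicontinuity, uniform in $q$, comes from the Hölder estimate of \cite{BKO25} for equations holding only where $|Dw-q|$ is large. Where $|Dw-q|\ge1$, \ref{a2} bounds $\widetilde\Phi$ from below, so $|F(D^2w)|\lesssim|\tilde f||Dw-q|^{\gamma}$. On this region we use $|Dw-q|^{\gamma}\le|Dw|^{\gamma}+|q|^{\gamma}$ together with the smallness of $|q|^\gamma\norm{\tilde f}$, and we absorb the $|Dw|^\gamma$ growth, which is sublinear, into a Pucci inequality with a bounded gradient term. A subsequence then converges uniformly, and the domains flatten to a half space. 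By the standard cancellation argument in \cite{IS2013}, which splits according to whether $q_k$ stays bounded, the limit solves $F_\infty(D^2w_\infty)=0$ in $\{x_n>0\}$ with $\beta(0)\cdot Dw_\infty=\tilde g_\infty(0)$. This limit problem is $C^{1,\alpha_0}$ up to the boundary. Since $\bar\alpha<\alpha_0$, choosing $r$ small gives the contradiction.

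Next we iterate. We set $v_k(x)=(v-l_k)(r^kx)/r^{k(1+\bar\alpha)}$ with $q_k=-\nabla l_k/r^{k\bar\alpha}$. We check that $v_k$ satisfies the hypotheses of the lemma. The rescaled $\Phi$ satisfies \ref{a2} thanks to the scaling section. The right-hand side picks up the factor $r^{k(1-\bar\alpha(1+s))}$, or $r^{k(1-\bar\alpha(1+s-i))}$ in the singular case, which is $\le1$ by \eqref{alpha range}. The boundary data use $\bar\alpha\le\alpha$, and the correction of $l_k$ needed to keep $\beta\cdot Dl_k=g(0)$ at rescaled points is handled as in \cite{BKO25}.

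We expect the main obstacle to be the singular-case condition $|q_k|^\gamma\norm{\tilde f_k}\le\varepsilon$. Here $|q_k|\lesssim r^{-k\bar\alpha}$ and $\norm{\tilde f_k}\lesssim\varepsilon\, r^{k(1-\bar\alpha(1+s-i))}$, so we need $r^{k[1-\bar\alpha(1+s-i)-\gamma\bar\alpha]}$ to stay bounded. We will try to handle this by renormalizing $\tilde f_k$ with the transformed exponents, so that the $|Dv-q|^\gamma$ factor is scaled consistently and $\bar\alpha\le 1/(1+s-i)$ suffices; if that fails we shrink $\bar\alpha$ further, still within the admissible range. Summing the geometric series of the $l_k$ then yields $C^{1,\bar\alpha}$ at boundary points. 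Combining this with the interior estimate of \cite{BBLL24a} gives the estimate on $\overline{\Omega\cap B_{1/2}}$ via the usual covering argument, and undoing the normalization gives the stated bound.
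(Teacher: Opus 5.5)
Your overall architecture matches the paper's. You transform via Lemma~\ref{transform} so that $i\ge 0$ with $|Du|^\gamma$ on the right, use the $q$-uniform H\"older estimate of Lemma~\ref{Holder original} on $\{|Dw-q|>1\}$ with sublinear absorption of $|Dw|^\gamma$, run compactness split by boundedness of $q_k$, and iterate with $q_k=-\nabla l_k/r^{k\bar\alpha}$. The gap is at the step you yourself flag as the main obstacle. You never verify the singular-case hypothesis $|q_k|^\gamma\norm{f_k}_{L^\infty}\le\epsilon$ along the iteration for every $\bar\alpha$ in \eqref{alpha range}. Your fallback of shrinking $\bar\alpha$ would prove the theorem only on a strictly smaller range; for instance it misses $\bar\alpha=1/(1+s-i)$ whenever that value lies in $(0,\alpha_0)\cap(0,\alpha]$. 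The obstruction is an artifact of a miscomputation. You assign the factor $r^{k(1-\bar\alpha(1+s-i))}$ to $\norm{f_k}$ itself and then multiply by $|q_k|^\gamma\lesssim r^{-k\bar\alpha\gamma}$. This counts $\gamma=-i$ twice and produces the spurious requirement $\bar\alpha\le 1/(1+s-2i)$.

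Here is the correct bookkeeping, with $s,i$ the original exponents, $\Phi_0(x,t)=\Phi(x,t)t^{-i}$, $s(\Phi_0)=s-i$ and $\gamma=-i$.
\begin{itemize}
\item Since $Dv(r^kx)=r^{k\bar\alpha}(Dv_k-q_k)$, the factor $|Dv|^\gamma$ on the right-hand side contributes $r^{k\bar\alpha\gamma}$. Hence $f_k(x)=r^{k(1-\bar\alpha(1-\gamma))}f(r^kx)/\Phi_0(r^kx,r^{k\bar\alpha})$.
\item Using $r^{k\bar\alpha}\le 1$ and \ref{a2} for $\Phi_0$ (with $\nu_0$ normalized to $1$), $\Phi_0(r^kx,r^{k\bar\alpha})\ge M^{-1}r^{k\bar\alpha(s-i)}$. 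Therefore $\norm{f_k}_{L^\infty}\le M r^{k(1-\bar\alpha(1+s-i-\gamma))}\norm{f}_{L^\infty}=M r^{k(1-\bar\alpha(1+s))}\norm{f}_{L^\infty}$. This is the same factor as in the degenerate case, since the transformation does not change $f$.
\item Only after multiplying by $|q_k|^\gamma\le C_1^\gamma r^{-k\bar\alpha\gamma}$ does $i$ enter. Here $C_1$ comes from the geometric-series bound on $\nabla l_k$. The result is $|q_k|^\gamma\norm{f_k}_{L^\infty}\lesssim r^{k(1-\bar\alpha(1+s-i))}\norm{f}_{L^\infty}$, which stays bounded in $k$ exactly when $\bar\alpha\le 1/(1+s-i)$.
\end{itemize}
The paper packages this as the identity $1+s(\Phi_0)-\gamma+(\gamma-i(\Phi_0))_+=1+s(\Phi_0)-\min(i(\Phi_0),\gamma)$. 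To turn ``bounded'' into ``$\le\epsilon$'', the initial smallness of $\norm{f}_{L^\infty}$ must also absorb the constant of order $\ov{C_e}M/(1-r^{\bar\alpha})$ coming from $|b_k|$; the paper takes $\norm{f}_{L^\infty}\le\frac{1-(1/2)^{\bar\alpha}}{4\ov{C_e}M}\epsilon$. Your ``$\varepsilon$ to be chosen'' can accommodate this. With this computation in place, no renormalization or shrinking of $\bar\alpha$ is needed, and your argument covers the full stated range.
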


\begin{rmk}
\label{optimal exponent}
The constant $\alpha_0 = \alpha_0(n,\lambda,\Lambda,\delta_0) \in (0,1]$ in the statement of the theorem denotes the optimal exponent of regularity theory for a homogeneous equation
with a constant oblique boundary condition. More precisely, any viscosity
solution $h$ of
\begin{equation*}
\left\{\begin{aligned}
F(D^{2}h) &=0 &&\text{in } B_{1}^{+}, \\
\beta_0\cdot Dh&=0 &&\text{on } T_{1},
\end{aligned}\right.
\end{equation*}
where $\beta_0$ is a constant vector satisfying
\ref{a3}, belongs to the class $C^{1,\alpha_0}_{\text{loc}}(B^+_1)$ and satisfies the estimate
\begin{align*}
\norm{h}_{C^{1,\alpha_0}(B^+_{3/4})} \leq
C_e\norm{h}_{L^\infty(B^+_1)},
\end{align*}
for some constant $C_e = C_e(n,\lambda,\Lambda,\delta_0)>1$. The authors in \cite{LZ2018} proved that there exists a universal constant $\alpha_0\in (0,1)$ for any uniformly elliptic $F$ and that $\alpha_0=1$ for any convex $F$.   
\end{rmk}

As a direct consequence of Theorem \ref{thm1}, we have the following result.
\begin{cor}
Let $\Omega$ be a bounded $C^{1}$ domain and $0\in\partial\Omega$. Assume \ref{a1}--\ref{a3}. In addition, assume that  $f\in C(\Omega)$, $g, h\in C^{\alpha}(\partial\Omega)$, and $\beta\in C^{\alpha}(\partial\Omega;\R^{n})$ for some $\alpha\in(0,1)$. 
Let $u$ be a viscosity solution to
\begin{equation*}
\left\{\begin{aligned}
\Phi(x, |Du|)F(D^{2}u) &=f &&\text{in } \Omega, \\
\beta\cdot Du+hu&=g &&\text{on } \partial\Omega.
\end{aligned}\right.
\end{equation*}
Then $u\in C^{1,\ov{\alpha}}(\ov{\Omega\cap B_{1/2}})$ with the estimate
\begin{align*}
\norm{u}_{C^{1,\ov{\alpha}}\left(\ov{\Omega\cap B_{1/2}}\right)}\leq c\left(\norm{u}_{L^{\infty}(\Omega\cap B_{1})}+\norm{f}_{L^{\infty}(\Omega\cap B_{1})}^{1/(1+i(\Phi))}+\norm{g}_{C^{\alpha}(\partial\Omega\cap B_{1})}\right),    
\end{align*}
where 
\begin{equation*}
\ov{\alpha}\in 
\begin{cases}
(0,\alpha_{0})\cap\left(0, \frac{1}{1+s(\Phi)}\right]\cap(0, \alpha]  \quad &\text{if } i(\Phi)\geq0, \\
(0,\alpha_{0})\cap\left(0, \frac{1}{1+s(\Phi)-i(\Phi)}\right]\cap(0,\alpha] \quad &\text{if } -1<i(\Phi)<0,
\end{cases}    
\end{equation*}
and
$c=c(n, \lambda, \Lambda, i(\Phi), M, \nu_{0}, \delta_{0},[\beta]_{C^{\alpha}(\partial\Omega\cap B_{1})}, \norm{h}_{C^{\alpha}(\partial\Omega\cap B_{1})}, C^{1} \text{ modulus of }\partial\Omega\cap B_{1}, \ov{\alpha})$.
\end{cor}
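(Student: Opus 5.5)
The idea is to reduce the Robin-type condition $\beta\cdot Du+hu=g$ to the pure oblique condition of Theorem \ref{thm1}. We set
\[
\tilde g:=g-hu \quad \text{on } \partial\Omega ,
\]
so that $u$ is a viscosity solution of \eqref{eq} with $g$ replaced by $\tilde g$. Theorem \ref{thm1} then applies provided $\tilde g\in C^{\alpha}(\partial\Omega\cap B_1)$, with a controlled norm. Since $\norm{\tilde g}_{C^\alpha}\le \norm{g}_{C^\alpha}+\norm{h}_{C^\alpha}\norm{u}_{C^\alpha(\ov{\Omega}\cap B_1)}$, everything reduces to a preliminary global H\"older estimate
\[
\norm{u}_{C^{\alpha}(\ov{\Omega\cap B_{3/4}})}\le c\Big(\norm{u}_{L^\infty(\Omega\cap B_1)}+\norm{f}_{L^\infty}^{1/(1+i(\Phi))}+\norm{g}_{L^\infty}\Big),
\]
together with a standard covering/rescaling to pass from $B_1$ to $B_{3/4}$.

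\textbf{Step 1: a global H\"older bound for $u$ (the main obstacle).} We first rescale. Set $\kappa:=\norm{u}_{L^\infty}+\norm{f}_{L^\infty}^{1/(1+i(\Phi))}+\norm{g}_{L^\infty}$ and $w:=u/\kappa$. By the scaling property of \ref{a2} (Section~\ref{scaling property}), $w$ solves an equation of the same class with $|w|\le1$, $\norm{\tilde f}_\infty\le1$, and a Robin datum of unit size.

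We then obtain the $C^{\gamma}$ bound for $w$ from the H\"older estimate used in the proof of Theorem \ref{thm1}, namely the estimate valid where the gradient is far from a point, combined with the sliding-cusp argument of \cite{BKO25}. To do this we treat $hw$ as a bounded boundary term, so that the oblique datum $g/\kappa-hw$ is merely bounded. The estimate of \cite{BKO25} only requires a bounded oblique datum. In the singular case we use Lemma \ref{transform} with $q=0$, so the right-hand side $f|Dv|^\gamma$ causes no uniformity issue.

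This yields $w\in C^{\gamma_1}$ for some small universal $\gamma_1$. If $\gamma_1<\alpha$, we bootstrap:
\begin{enumerate}
\item Since $hw\in C^{\gamma_1}$, the datum $\tilde g\in C^{\gamma_1}$.
\item Theorem \ref{thm1} with $\alpha$ replaced by $\gamma_1$ gives $w\in C^{1,\bar\gamma}$.
\item In particular $w$ is Lipschitz, hence $w\in C^\alpha$ with a universal bound.
\end{enumerate}

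\textbf{Step 2: conclusion.} With $\tilde g\in C^\alpha$ and $\norm{\tilde g}_{C^\alpha}\le c\,\kappa\,(1+\norm{h}_{C^\alpha})$, we apply Theorem \ref{thm1} once more with the original $\alpha$. This gives the stated $C^{1,\ov\alpha}$ estimate, with the range of $\ov\alpha$ inherited from \eqref{alpha range}. The resulting constant depends additionally on $\norm{h}_{C^{\alpha}}$, and on $\partial\Omega\cap B_1$ after a covering argument.

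The delicate point is Step 1: one has to make sure that the zero-th order boundary term does not spoil the sliding paraboloid/cusp argument. We would handle this by treating $hu$ as part of the bounded datum rather than as structure.
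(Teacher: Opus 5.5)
Your proposal is correct and is essentially the argument behind the paper's claim that the corollary is a direct consequence of Theorem~\ref{thm1}. You treat $g-hu$ as the oblique datum, get the preliminary boundary H\"older bound on $u$ from the bounded-datum estimate (Lemma~\ref{Holder original}, i.e.\ Lemma~\ref{holder} with $q=0$), bootstrap, and then apply Theorem~\ref{thm1}.

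The one point to fix is the normalization in Step 1. Dividing by $\kappa$ keeps the rescaled source bounded only when $\kappa\ge 1$, because \ref{a2} gives $\Phi(x,\kappa)\ge \nu_0\kappa^{i(\Phi)}/M$ only for $\kappa\ge 1$; for $\kappa<1$ you only have $\Phi(x,\kappa)\ge \nu_0\kappa^{s(\Phi)}/M$. So you should normalize by $\max\{1,\kappa\}$, exactly as the paper does with $K=1+\cdots$ in the proof of Theorem~\ref{thm1}.
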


This paper is organized as follows. Section \ref{sec2} introduces the basic notation and collects preliminary results used throughout the paper. Section \ref{sec3} is devoted to prove improvement of flatness results using the compactness method. The proof of Theorem \ref{thm1} is presented in Section \ref{sec4}.

\section{Preliminaries}
\label{sec2}
\subsection{Notation, definitions and main assumptions} For $r>0$, we denote by $B_r = \{x \in \R^n : |x| < r\}$ the open ball in $\R^n$ centered at the origin with radius $r$. We write $x=(x',x_n)\in \R^n$, where $x'\in \R^{n-1}$. We also set
\[
B_r^+ := B_r \cap \{x_n > 0\}, \quad T_r := B_r \cap \{x_n = 0\},
\]
where $B_r^+$ is the upper half-ball and $T_r$ denotes its flat boundary. For $x_0 \in \R^n$, we denote by $B_r(x_0) := B_r + x_0$ the ball centered at $x_0$. For a domain $\Omega \subset \R^n$ and $x_0 \in \R^n$, we set
\[
\Omega_r := \Omega \cap B_r, \quad \partial \Omega_r := \partial \Omega \cap B_r,
\]
and
\[
\Omega_r(x_0) := \Omega \cap B_r(x_0), \quad \partial \Omega_r(x_0) := \partial \Omega \cap B_r(x_0).
\]
Let $\mathcal{S}^{n}$ denote the space of real $n\times n$ symmetric matrices. For parameters $0<\lambda\leq\Lambda$, the Pucci extremal operators $\mathcal{M}^{\pm}_{\lambda, \Lambda}: \mathcal{S}^{n}\rightarrow \R$ are defined as 
\begin{align*}
\mathcal{M}^{+}_{\lambda, \Lambda}(M)=  \Lambda\sum_{e_i>0}e_{i}+\lambda\sum_{e_{i}<0}e_{i} \quad \text{and} \quad \mathcal{M}^{-}_{\lambda, \Lambda}(M)=  \lambda\sum_{e_{i}>0}e_{i}+\Lambda\sum_{e_{i}<0}e_{i}, 
\end{align*}
where $e_{i}=e_{i}(M)$ are the eigenvalues of $M\in\mathcal{S}^{n}$. 
For a measurable function $h : \Omega \to \R$ and $\gamma \in (0,1]$, we define
\[
[h]_{C^{\gamma}(\Omega)} := \sup_{\substack{x,y \in \Omega \\ x \neq y}} \frac{|h(x) - h(y)|}{|x - y|^{\gamma}}.
\]
We denote by $USC(\overline{\Omega})$ the set of upper semicontinuous functions on $\overline{\Omega}$ and by $LSC(\overline{\Omega})$ the set of lower semicontinuous functions on $\overline{\Omega}$. 

We assume that $\Omega$ is a bounded $C^{1}$ domain. 
Without loss of generality, we may assume that $0 \in \partial \Omega_1$. 
Then $\partial \Omega_1$ can be represented as the graph of a function 
$\varphi = \varphi_{\Omega} : T_1 \to \mathbb{R}$ with $\varphi \in C^1$. 
That is,
\begin{equation}
\label{boundary repre}
\begin{aligned}
\partial\Omega_1 &= \{ (x',x_n) \in B_1 : x_n = \varphi(x') \}, \\
\Omega_1 &= \{ (x',x_n) \in B_1 : x_n > \varphi(x') \}.
\end{aligned}
\end{equation}
We write $\beta(x') := \beta(x', \varphi(x'))$, viewing $\beta$ as a function on $T_1$.

We introduce the notion of an almost non-decreasing function.
\begin{defn}
\label{non decrea}
We say that a function $f:(0, \infty)\to\R$ is almost non-decreasing (resp., almost non-increasing) with constant $M>0$ in $(0, \infty)$ if it satisfies
\begin{align*}
f(t)\leq Mf(s) \quad (\text{resp.,} \;\; f(t)\geq Mf(s)) \quad \text{for all } 0<t\leq s.  
\end{align*}
\end{defn}

We now state the main assumptions of the paper.
\begin{description}
    \item[(A1)\label{a1}] $F$ is $(\lambda, \Lambda)$-uniformly elliptic, i.e., 
    \begin{align*}
    \mathcal{M}^{-}_{\lambda, \Lambda}(X-Y)\leq F(X)-F(Y)\leq\mathcal{M}^{+}_{\lambda, \Lambda}(X-Y),
    \end{align*}
    for every $X, Y\in \mathcal{S}^{n}$, and $F(0)=0$.
    
    \item[(A2)\label{a2}] $\Phi : \Omega\times (0,\infty)\rightarrow [0,\infty)$ is a continuous map satisfying the following properties: 
    \begin{enumerate}
        \item[1.] There exist constants $s(\Phi)\geq i(\Phi)> -1$ such that the map $\displaystyle t\mapsto \frac{\Phi(x,t)}{t^{i(\Phi)}}$ is almost non-decreasing (see Definition \ref{non decrea}) with constant $M\geq 1$ in $(0,\infty)$, and the map $\displaystyle t\mapsto \frac{\Phi(x,t)}{t^{s(\Phi)}}$ is almost non-increasing with constant $M\geq 1$ in $(0,\infty)$ for all $x\in \Omega$.
        \item[2.] There exist constants $0<\nu_0\leq \nu_1$ such that $\displaystyle \nu_{0} \leq \Phi(x,1) \leq \nu_{1}$ for all $x\in \Omega$.
    \end{enumerate}
    
    \item[(A3)\label{a3}] There exists a positive constant $\delta_{0}>0$ such that 
    \begin{align*}
    \beta\cdot \mathbf{n}\geq\delta_{0} \quad \text{and} \quad \norm{\beta}_{L^{\infty}(\partial\Omega)}\leq 1,  
    \end{align*}
    where $\mathbf{n}$ is the inner normal vector of $\partial\Omega$.     
\end{description}

\indent We give the definition of viscosity solutions to the equation
\begin{align*}
G(D^{2}u,Du,x)=f \quad \text{in} \quad \Omega,
\end{align*}
with oblique boundary conditions, where $G:\mathcal{S}^{n}\times\mathbb{R}^{n}\times\Omega \to \mathbb{R}$ and $f \in C(\Omega)\cap L^{\infty}(\Omega)$. 
For the definition of viscosity solutions with oblique boundary conditions, we refer to \cite{BKO25} and the references therein, while for the definition in the singular setting we refer to \cite{BBLL24a} and the references therein.

\begin{defn} 
	A function $u\in USC(\overline{\Omega})$ (resp., $u\in LSC(\overline{\Omega})$) is called a viscosity subsolution (resp., supersolution) of  
\begin{equation}
\left\{\begin{aligned}
\label{def}
    G(D^{2}u,Du,x)&=f &&\text{in } \Omega,\\
    \beta\cdot Du&=g &&\text{on } \partial\Omega,\\
\end{aligned}\right.
\end{equation}
if the following hold:
\begin{enumerate}
\item[(i)] If $x_0\in \Omega$, then either

\begin{enumerate}
\item[(a)] There exists $\delta>0$ such that $B_{\delta}(x_{0})\subset \Omega$, 
$u$ is constant in $B_{\delta}(x_{0})$, and
\[
f(x) \leq  0 \quad (\text{resp.,} \;\; f(x)\geq 0) \quad\quad \text{for all } x\in B_{\delta}(x_{0}).
\]

\item[(b)] For all $\varphi\in C^{2}(\overline{\Omega})$ such that 
$u-\varphi$ has a local maximum (resp., minimum) at $x_0$ and 
$D\varphi(x_{0})\neq 0$, then
\[
G(D^{2}\varphi(x_0),D\varphi(x_0),x_{0})
\geq  f(x_{0}) \quad (\text{resp.,} \;\; G(D^{2}\varphi(x_0),D\varphi(x_0),x_{0})
\leq  f(x_{0})).
\]
\end{enumerate}
\item[(ii)] If $x_0\in \partial\Omega$, for all $\varphi\in C^{2}(\overline{\Omega})$ such that $u-\varphi$ has a local maximum (resp., minimum) at $x_0$, then 
	\begin{align*}
		\beta(x_{0})\cdot D\varphi(x_0)\leq  \;g(x_{0}) \quad (\text{resp.,} \;\; \beta(x_{0})\cdot D\varphi(x_0)\geq  \;g(x_{0})).
	\end{align*}	
\end{enumerate}
We say that $u\in C(\overline{\Omega})$ is a viscosity solution of \eqref{def} if $u$ is is both a viscosity subsolution and supersolution.
\end{defn}

\subsection{Known results}
We recall a result on the boundary H\"older regularity for uniformly elliptic equations on $C^1$ domains that hold only where the gradient is far from some point.

We fix $0<\lambda \leq \Lambda$, $M \in \mathcal{S}^{n}$, and $q \in \mathbb{R}^{n}$, and define
\begin{align*}
\mathcal{P}^{+}_{\lambda, \Lambda}(M,q):&=\mathcal{M}^{+}_{\lambda, \Lambda}(M)+\Lambda|q|, \\
\mathcal{P}^{-}_{\lambda, \Lambda}(M,q):&=\mathcal{M}^{-}_{\lambda, \Lambda}(M)-\Lambda|q|.
\end{align*}

\begin{lem}\cite[Theorem 3.1]{BKO25}
\label{Holder original}
Let $\Omega$ be a bounded $C^{1}$ domain whose boundary is represented as in \eqref{boundary repre}. Assume that \ref{a3} holds. In addition, assume that $g\in C(\partial\Omega)$ and $\beta\in C(\partial\Omega;\R^{n})$. Let $u\in C(\ov{\Omega_{1}})$ be a viscosity solution to 
\begin{equation*}
\left\{\begin{aligned}
\mathcal{P}^{+}_{\lambda, \Lambda}(D^{2}u,Du)&\geq -C_{0} && \text{in } \left\{|Du-q|>\theta\right\}\cap\Omega_{1}, \\ 
\mathcal{P}^{-}_{\lambda, \Lambda}(D^{2}u,Du) &\leq C_{0} && \text{in } \left\{|Du-q|>\theta\right\}\cap\Omega_{1}, \\
\beta\cdot Du &=g &&\text{on } \partial\Omega_{1},\\
\norm{u}_{L^{\infty}(\Omega_{1})}&\leq 1, \\
\norm{g}_{L^{\infty}(T_{1})}&\leq C_{0},
\end{aligned}\right.
\end{equation*}    
where $q\in\R^{n}$ and $0<\theta\leq1$.
Then there exists $\mu=\mu(\delta_{0}) \in(0, \delta_{0}/2]$ such that, if $[\varphi]_{C^{1}}\leq \mu$, then $u\in C^{\alpha}(\ov{\Omega_{1/2}})$ for some $\alpha=\alpha(n, \lambda, \Lambda,\delta_{0})\in(0,1)$ with the estimate
\begin{align*}
\norm{u}_{C^{\alpha}(\ov{\Omega_{1/2}})}\leq C,
\end{align*}
where $C=C(n, \lambda, \Lambda, \delta_{0}, C_{0})$ does not depend on $q$.
\end{lem}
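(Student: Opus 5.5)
The plan is to follow the sliding-cusp strategy of \cite{IS2016}, adapted to the boundary as in \cite{BKO25}. I would reduce the H\"older estimate to a \emph{diminish of oscillation} lemma that holds uniformly in $q$. Concretely, the target is the following statement: there are universal $\varepsilon_1,\tau\in(0,1)$ such that if $u\le 1$ in $\Omega_1$ and $|\{u\le 0\}\cap B_{1/8}(e_n/4)|\ge \varepsilon_1|B_{1/8}|$, then $u\le 1-\tau$ in $\Omega_{1/4}$, provided $C_0$ and $\theta$ are small enough. Applying this lemma to $u$ and to $-u$, at dyadic scales centred both at boundary points and at interior points, gives $\operatorname{osc}_{\Omega_{r}}u\le C r^{\alpha}$. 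Combining the boundary oscillation decay with the interior estimate of \cite{IS2016} (a covering argument in which balls touching $\partial\Omega$ are recentred at boundary points) then yields the global $C^{\alpha}(\ov{\Omega_{1/2}})$ bound.

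The rescaling must be chosen so that the class of equations is preserved with \emph{the same} smallness of $\theta$ and $C_0$. Take $v(x)=(u(rx)-c)/\kappa$ with $\kappa$ the current oscillation. Then $Dv=(r/\kappa)Du(rx)$, so the exceptional set becomes $\{|Dv-\tilde q|\le r\theta/\kappa\}$ with $\tilde q=rq/\kappa$. The boundary datum becomes $(r/\kappa)g$, the drift term picks up a factor $r$, and the flatness $[\varphi]_{C^1}$ does not increase. Since $\kappa\sim r^{\alpha}$ with $\alpha<1$, all of these constants improve. The one point to watch is that we must \emph{not} subtract $q\cdot x$. Doing so would change the oblique condition to $\beta\cdot Dv=g-\beta\cdot q$, which is unbounded in $q$. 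The freedom in $q$ must therefore be absorbed inside the measure estimate itself.

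The core step is the measure estimate, proved by sliding cusps $\phi_y(x)=-a|x-y|^{1/2}$ (or steep paraboloids near the boundary) from below, with vertices $y$ in the set where $u\le 0$. For each vertex we slide $\phi_y$ up until it touches $u$, and we consider the set of contact points. At an interior contact point $x$ the gradient $D\phi_y(x)$ is large. We need it to avoid the ball $\{|p-q|\le\theta\}$. To achieve this, as in \cite{IS2016}, I would discard the vertices $y$ whose contact gradients fall in that ball. Because $p\mapsto y$ is Lipschitz on the contact set, the discarded vertices form a set of measure $O(\theta^{n}\cdot\text{const})$, which is small when $\theta$ is small, and this bound is uniform in $q$. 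At the remaining contact points the supersolution inequality $\mathcal{P}^-\le C_0$ is available. The area formula together with the Hessian bound from the Pucci inequality then bounds the measure of the remaining vertex set by a constant times the measure of the contact set. The contact set in turn lies in $\{u\le 1-\tau\}$, which gives the lemma.

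The main obstacle is excluding contacts on $\partial\Omega$. At a boundary contact point $x_0$, the viscosity inequality for supersolutions gives $\beta(x_0)\cdot D\phi_y(x_0)\ge g(x_0)$. I would combine the cusp with a tilted paraboloid, for example adding $-A(x_n-\varphi(x'))$ scaled suitably, so that $\beta\cdot D\phi<-C_0$ at every boundary point. This uses $\beta\cdot\mathbf n\ge\delta_0$ and $[\varphi]_{C^1}\le\mu\le\delta_0/2$, which together guarantee that $D\phi$ has a dominant inner-normal component. The resulting inequality contradicts the boundary condition, so all contacts are interior, and the interior argument above applies. The delicate part is reconciling this boundary barrier with the requirement that contact gradients avoid the ball around $q$ while keeping all constants independent of $q$. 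I would handle this by placing the vertices a fixed distance from $\partial\Omega$ and accepting an extra $\delta_0$-dependence in $\varepsilon_1$ and $\tau$.
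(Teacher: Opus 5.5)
The genuine gap is in the step that carries all of the uniformity in $q$. You discard the vertices whose contact gradient falls in $\ov{B_\theta(q)}$ and claim they form a set of measure $O(\theta^n)$ because ``$p\mapsto y$ is Lipschitz''. For the cusp $-a|x-y|^{1/2}$ touching $u$ at $x$ with gradient $p$, the vertex is $y=x+\frac{a^2}{4}\,p\,|p|^{-3}$. So $y$ is determined by the pair $(x,p)$, not by $p$ alone. The discarded vertices are therefore the image of the contact points lying in $\{|Du-q|\le\theta\}$ under $x\mapsto x+\frac{a^2}{4}Du\,|Du|^{-3}$. The Jacobian of this map is $I+\frac{a^2}{4}D_p(p|p|^{-3})\,D^2u$, and it is controlled only through the upper bound on $D^2u$ that the Pucci inequality supplies. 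That bound is exactly what is unavailable on $\{|Du-q|\le\theta\}$. Nothing in your argument ties this set to $\theta^n$; its size is governed by the part of the contact set where the equation says nothing.

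For the paraboloids you also want to use, the claim is false outright. Take $w(x)=1+q\cdot x+\frac{\theta}{4}|x|^2$ with $|q|\le 1$. It is nonnegative on $B_1$ with $w(0)=1$, and it satisfies the inequalities vacuously, since $|Dw-q|<\theta$ in $B_1$. For any opening $a\ge 8$, every paraboloid $-\frac a2|x-y|^2$ with $y\in B_{1/4}$ touches $w$ at the interior point $x=(ay-q)/(a+\theta/2)$, where $|Dw-q|<\theta$. So \emph{every} vertex is discarded.

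The boundary tilt does not help either. Use $-Ax_n$ rather than $-A(x_n-\varphi(x'))$, which is only $C^1$; $\beta\cdot e_n\ge\delta_0-\mu\ge\delta_0/2$ suffices. The tilt must dominate the cusp gradient on $\partial\Omega$, so it shifts all contact gradients by more than the cusp's lower gradient bound. The tilted cusps then no longer keep their contact gradients away from a prescribed $q$. Placing the vertices away from $\partial\Omega$ does not change this.

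The paper does not reprove this lemma; it quotes \cite[Theorem 3.1]{BKO25}. According to the introduction, that proof combines sliding paraboloids and sliding cusps with a scaling adapted to the equation. The two families are complementary on exactly this point: cusp contact gradients are bounded below, while contact gradients of paraboloids of fixed opening are bounded above. What your proposal is missing is such a mechanism, one that keeps the contact gradients in $\{|p-q|>\theta\}$ uniformly in $q$ through the choice of test functions and scales, instead of by discarding vertices.

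Separately, your measure estimate does not give your oscillation lemma as you have set it up. With vertices in $\{u\le 0\}$ and contacts in $\{u\le 1-\tau\}$, you only obtain $|\{u\le 1-\tau\}|\gtrsim|\{u\le0\}\cap B_{1/8}(e_n/4)|$. That is weaker than the hypothesis and gives no pointwise bound on $\Omega_{1/4}$. Also, sliding from below on $u$ uses the supersolution inequality, whereas an upper bound on $u$ needs the subsolution one.

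The usual structure is as follows. Slide on the nonnegative supersolution $w=1-u$, with vertices in a fixed ball, assuming $w<\tau$ somewhere in $\Omega_{1/4}$. This shows that $\{w\le M\tau\}$ fills a universal fraction of a ball. Then iterate over scales and positions up to the boundary (a covering or ink-spots argument) until $\{w<1\}$ nearly fills $B_{1/8}(e_n/4)$. That contradicts $|\{u\le0\}\cap B_{1/8}(e_n/4)|\ge\varepsilon_1|B_{1/8}|$.
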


We next introduce a transformation that converts the equation from the singular regime to the degenerate regime. 

\begin{lem} \cite[Proposition 2.1]{BBLL24a}
\label{transform}
 Let $u$ be a viscosity solution to 
\begin{align*}
\Phi(x, |Du|)F(D^{2}u)=f \quad \text{in } \Omega,    
\end{align*}
where $F$ satisfies \ref{a1}, $\Phi$ satisfies \ref{a2} with $-1<i(\Phi)<0$, and $f\in C(\Omega)\cap L^{\infty}(\Omega)$. 
If we define 
\begin{align*}
\widetilde{\Phi}(x,t):=\frac{\Phi(x,t)}{t^{i(\Phi)}} \quad \text{in } \Omega\times(0, \infty),    
\end{align*} 
then $u$ solves 
\begin{align*}
\widetilde{\Phi}(x,|Du|)F(D^{2}u)=f|Du|^{-i(\Phi)} 
\quad \text{in } \Omega, 
\end{align*}
in the viscosity sense. Moreover, $\widetilde{\Phi}$ satisfies \ref{a2} in $\Omega\times(0, \infty)$ with the parameters 
\begin{align*}
    i(\widetilde{\Phi})=0, \quad s(\widetilde{\Phi})=s(\Phi)-i(\Phi)\geq0, \quad \widetilde{M}=M, \quad \widetilde{\nu}_{0}={\nu}_{0}, \quad\text{ and } \quad \widetilde{\nu}_{1}={\nu}_{1}.
\end{align*}
\end{lem}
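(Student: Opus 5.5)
The statement to prove is Lemma~\ref{transform}. The plan is to verify the two claims separately: the structural properties of $\widetilde{\Phi}$, which are pure algebra, and the viscosity equation for $u$, which requires checking the definition at each point.

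\emph{Structural properties of $\widetilde{\Phi}$.} I will check \ref{a2} directly from the definition.
\begin{itemize}
\item Continuity and nonnegativity of $\widetilde{\Phi}$ on $\Omega\times(0,\infty)$ are inherited from $\Phi$, since $t^{i(\Phi)}>0$ there.
\item The map $t\mapsto \widetilde{\Phi}(x,t)/t^{0}=\Phi(x,t)/t^{i(\Phi)}$ is almost non-decreasing with constant $M$, by hypothesis.
\item The map $t\mapsto \widetilde{\Phi}(x,t)/t^{s(\Phi)-i(\Phi)}=\Phi(x,t)/t^{s(\Phi)}$ is almost non-increasing with constant $M$, again by hypothesis.
\item Since $1^{i(\Phi)}=1$, we have $\widetilde{\Phi}(x,1)=\Phi(x,1)\in[\nu_0,\nu_1]$.
\end{itemize}
This gives $i(\widetilde{\Phi})=0$ and $s(\widetilde{\Phi})=s(\Phi)-i(\Phi)\ge 0$, with the same $M$, $\nu_0$ and $\nu_1$.

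\emph{Viscosity equation.} I will treat the subsolution property; the supersolution property is symmetric. Fix $x_0\in\Omega$. By the definition, either alternative (a) or alternative (b) holds for the original equation at $x_0$.

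\emph{Case (a).} Here $u$ is constant on some $B_\delta(x_0)$ and $f\le 0$ there. Then $f|Du|^{-i(\Phi)}$ must be interpreted on the flat region, where the natural convention gives $|Du|^{-i(\Phi)}=0$ because $-i(\Phi)>0$. So the new right-hand side vanishes and is $\le 0$. More precisely, I will read the new equation with right-hand side $\tilde f(x):=f(x)|Du(x)|^{-i(\Phi)}$ understood through test functions, i.e.\ as $G(X,p,x)=\widetilde{\Phi}(x,|p|)F(X)-f(x)|p|^{-i(\Phi)}$. Case (a) is then verified because $f\le 0$ implies $\tilde f\le 0$ for any gradient value.

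\emph{Case (b).} Take $\varphi\in C^2$ touching $u$ from above at $x_0$ with $D\varphi(x_0)\neq 0$. The original definition gives $\Phi(x_0,|D\varphi(x_0)|)F(D^2\varphi(x_0))\ge f(x_0)$. Dividing by the strictly positive number $|D\varphi(x_0)|^{i(\Phi)}$ preserves the inequality and yields
\[
\widetilde{\Phi}(x_0,|D\varphi(x_0)|)F(D^2\varphi(x_0))\ge f(x_0)|D\varphi(x_0)|^{-i(\Phi)},
\]
which is exactly the test inequality for the new equation, viewed as $G(D^2u,Du,x)=0$ with the gradient-dependent right-hand side moved into $G$.

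The only delicate point is the formal meaning of the right-hand side $f|Du|^{-i(\Phi)}$ in the viscosity sense. I will handle it by absorbing it into the operator $G$, so that the framework of the definition, with $G$ depending on $(X,p,x)$, applies verbatim. Test functions with vanishing gradient are excluded from alternative (b) in both equations, so no further check is needed there. Because the equivalence is pointwise and uses only multiplication by a positive factor, I expect no real obstacle.
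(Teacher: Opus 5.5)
Your proof is correct, and it is the natural direct verification behind the cited result \cite[Proposition 2.1]{BBLL24a}; the paper itself gives no proof. The (A2) data transfer because the two quotients for $\widetilde{\Phi}$ coincide with $\Phi/t^{i(\Phi)}$ and $\Phi/t^{s(\Phi)}$, and $\widetilde{\Phi}(x,1)=\Phi(x,1)$. The test inequalities transfer by dividing by $|D\varphi(x_0)|^{i(\Phi)}>0$, and alternative (a) is trivial once the gradient term is absorbed into $G$.

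As you note, completeness rests on the paper's definition excluding test functions with $D\varphi(x_0)=0$. If the transformed equation were required in a sense that also tests such functions, division alone would not cover that case, and a separate perturbation argument would be needed.
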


\begin{rmk}
\label{unified}
In light of Lemma \ref{transform}, we can treat the singular and degenerate regimes within a unified framework. Specifically, instead of the original problem \eqref{eq}, it suffices to consider the modified problem
\begin{equation}
\label{problem1}
\left\{\begin{aligned}
\Phi(x,|Du|)F(D^{2}u)&=f|Du|^{\gamma} &&\text{in } \Omega_{1},\\
\beta\cdot Du&=g &&\text{on } \partial\Omega_{1},
\end{aligned}\right.
\end{equation}
under the assumptions \ref{a1}--\ref{a3}, but with the restrictions $i(\Phi)\geq 0$ and $\gamma \in [0,1)$. (By a slight abuse of notation, we continue to use $\Phi$ and $f$ for the transformed quantities.) 

To clarify the correspondence between \eqref{eq} and \eqref{problem1}, let $i_{orig}$ and $s_{orig}$ denote the parameters of $\Phi$ in the original equation \eqref{eq}. Then the parameters $(i(\Phi), s(\Phi), \gamma)$ in \eqref{problem1} are determined as follows:
\begin{itemize}
    \item In the \textbf{degenerate case} ($i_{orig} \geq 0$), we simply have 
    $$ (i(\Phi), s(\Phi), \gamma) = (i_{orig}, s_{orig}, 0). $$
    \item In the \textbf{singular case} ($-1 < i_{orig} < 0$), Lemma \ref{transform} yields 
    $$ (i(\Phi), s(\Phi), \gamma) = (0, s_{orig}-i_{orig}, -i_{orig}). $$
\end{itemize}
\end{rmk}

\subsection{Scaling property}
\label{scaling property}
In this subsection, we investigate the scaling property of viscosity solutions to 
\begin{align}
\label{scale eq}
\Phi(x, |Du|)F(D^{2}u)=f|Du|^{\gamma} \quad \text{in } \Omega,    
\end{align}
where $\Phi$ satisfies \ref{a2}, $\gamma \in [0, 1)$ and $f\in C(\Omega)\cap L^{\infty}(\Omega)$.
Let $u$ be a viscosity solution to \eqref{scale eq}. For $B_{r}(x_{0})\subset\Omega$ and $\kappa>0$, define $$\widetilde{u}(x):=\frac{u(rx+x_{0})}{\kappa} \quad \text{in } B_{1}.$$ 
Then $\widetilde{u}$ satisfies
\begin{align*}
\widetilde{\Phi}(x,|D\widetilde{u}|)\widetilde{F}(D^{2}\widetilde{u})=\widetilde{f} \quad \text{in } B_{1},    
\end{align*}
in the viscosity sense, where
\begin{align*}
\widetilde{\Phi}(x,t) &:= \frac{\Phi\left(rx+x_{0},\frac{\kappa}{r}t\right)}{\Phi\left(rx+x_{0}, \frac{\kappa}{r}\right)} \quad \text{in } B_{1}\times(0, \infty), \\
\widetilde{F}(M) &:= \frac{r^{2}}{\kappa}F\left(\frac{\kappa}{r^{2}}M\right) \quad \text{in } \mathcal{S}^{n}, \\
\widetilde{f}(x) &:= \frac{r^{2-\gamma}f(rx+x_{0})}{\kappa^{1-\gamma}\Phi\left(rx+x_{0}, \frac{\kappa}{r}\right)} \quad \text{in } B_{1}.
\end{align*}
Note that by direct calculation, $\widetilde{\Phi}$ satisfies \ref{a2} in $B_{1} \times (0, \infty)$ with the parameters 
\begin{align*}
    i(\widetilde{\Phi})=i(\Phi), \quad s(\widetilde{\Phi})=s(\Phi), \quad \widetilde{M}=M, \quad \text{and} \quad \widetilde{\nu}_{0}=\widetilde{\nu}_{1}=1.
\end{align*}
In addition, $\widetilde{F}$ satisfies \ref{a1} with the same ellipticity constants $\lambda$ and $\Lambda$ as $F$.

\section{Improvement of flatness}
\label{sec3}

\subsection{Boundary H\"older regularity}

As discussed in the preceding Remark \ref{unified}, we henceforth focus on the unified problem \eqref{problem1} under the assumptions \ref{a1}--\ref{a3}, where the gradient singularity has been removed (i.e., $i(\Phi) \geq 0$ and $\gamma \in [0,1)$).

We first establish the boundary H\"older regularity of solutions to \eqref{problem1} by employing Lemma \ref{Holder original}. This regularity result plays a crucial role in the compactness argument, which will subsequently be used to prove the improvement of flatness.  

\begin{lem}
\label{holder}
Let $\Omega$ be a bounded $C^{1}$ domain whose boundary is represented as in \eqref{boundary repre}. Assume \ref{a1}--\ref{a3}. Let $i(\Phi)\geq0$, $0\leq\gamma<1$ and $A>0$. Assume that  $f\in C(\Omega)\cap L^{\infty}(\Omega)$, $g\in C(\partial\Omega)$ and $\beta\in C(\partial\Omega;\R^{n})$. Let $u$ satisfy $|u|\leq1$ in $\Omega_{1}$ and be a viscosity solution to 
\begin{equation*}
\left\{\begin{aligned}
\Phi\left(x, \left|Du-q\right|\right)F(D^{2}u) &=f|Du-q|^{\gamma} &&\text{in } \Omega_{1}, \\
\beta\cdot Du&=g &&\text{on } \partial\Omega_{1},
\end{aligned}\right.
\end{equation*}
where $(1+|q|^{(\gamma-i(\Phi))_+})\norm{f}_{L^\infty}\leq A$. 
Then there exists $\mu=\mu(\delta_{0}) \in(0, \delta_{0}/2]$ such that, if $[\varphi]_{C^{1}}\leq \mu$, then we have $u\in C^{\alpha}(\ov{\Omega_{1/2}})$ for some $\alpha=\alpha(n, \lambda, \Lambda,\delta_{0})\in(0,1)$ with the estimate
\begin{align*}
\norm{u}_{C^{\alpha}(\ov{\Omega_{1/2}})}\leq C,
\end{align*}
where $C=C(n, \lambda, \Lambda, i(\Phi), M, \nu_{0}, \delta_{0} ,\gamma, A)$ does not depend on $q$.
\end{lem}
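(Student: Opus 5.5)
Our plan is to reduce Lemma~\ref{holder} to Lemma~\ref{Holder original} with $\theta=1$. Both statements concern the same domain and boundary condition, so only the interior differential inequalities need checking. Concretely, we will show that on the set $\{|Du-q|>1\}\cap\Omega_1$ the function $u$ is a viscosity solution of
\[
\mathcal{M}^{-}_{\lambda,\Lambda}(D^2u)\le C_0 \quad\text{and}\quad \mathcal{M}^{+}_{\lambda,\Lambda}(D^2u)\ge -C_0,
\]
with $C_0$ independent of $q$. Since $\mathcal{P}^{+}\ge\mathcal{M}^{+}$ and $\mathcal{P}^{-}\le\mathcal{M}^{-}$, these inequalities imply the hypotheses of Lemma~\ref{Holder original}. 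The remaining hypotheses are also satisfied. The bound $\|u\|_{L^\infty}\le 1$ is given. The bound $\|g\|_{L^\infty}\le C_0$ needs a normalization: either it is implicit in the setting, or we enlarge $C_0$ to include $\|g\|_{L^\infty}$.

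The estimate on $\{|Du-q|>1\}$ goes through test functions. Let $\varphi$ touch $u$ from below at $x_0$ with $|D\varphi(x_0)-q|=t>1$. Here $D\varphi(x_0)-q\neq 0$, so the supersolution definition applies and gives
\[
\Phi(x_0,t)\,F(D^2\varphi(x_0))\le f(x_0)\,t^{\gamma}.
\]
By \ref{a2} with $i(\Phi)\ge0$ and $t\ge1$, we have $\Phi(x_0,t)\ge M^{-1}t^{i(\Phi)}\Phi(x_0,1)\ge M^{-1}\nu_0\,t^{i(\Phi)}$. Therefore
\[
F(D^2\varphi(x_0))\le M\nu_0^{-1}\,\|f\|_{L^\infty}\,t^{\gamma-i(\Phi)},
\]
and uniform ellipticity with $F(0)=0$ gives $\mathcal{M}^{-}(D^2\varphi(x_0))\le F(D^2\varphi(x_0))$. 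The symmetric argument handles the subsolution side.

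The main obstacle is the factor $t^{\gamma-i(\Phi)}$, which can be large when $\gamma>i(\Phi)$ because $t=|D\varphi(x_0)-q|$ is not bounded a priori. This is where the hypothesis on $q$ enters. If $\gamma\le i(\Phi)$, then $t^{\gamma-i(\Phi)}\le1$ for $t\ge1$, and $C_0=M\nu_0^{-1}A$ works.

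If $\gamma>i(\Phi)$, the factor $t^{\gamma-i(\Phi)}$ grows in $t$ and cannot be bounded by $|q|$ alone. We will therefore split the region by gradient size. First, using $\gamma-i(\Phi)<1$, we have the elementary bound
\[
t^{\gamma-i(\Phi)}\le (|D\varphi|+|q|)^{\gamma-i(\Phi)}\le 1+|D\varphi|+|q|^{\gamma-i(\Phi)}.
\]
Multiplying by $\|f\|_{L^\infty}$, the hypothesis $(1+|q|^{\gamma-i(\Phi)})\|f\|_{L^\infty}\le A$ controls the constant part $(1+|q|^{\gamma-i(\Phi)})\|f\|_{L^\infty}$ by $A$. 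The remaining part $\|f\|_{L^\infty}|D\varphi|\le A|D\varphi|$ is a linear gradient term. It is absorbed by the $\pm\Lambda|Du|$ terms in $\mathcal{P}^{\pm}$, after replacing $\Lambda$ by $\max\{\Lambda,MA/\nu_0\}$, which is harmless. We thus obtain
\[
\mathcal{P}^{-}(D^2u,Du)\le C_0 \quad\text{and}\quad \mathcal{P}^{+}(D^2u,Du)\ge -C_0 \quad\text{in } \{|Du-q|>1\}\cap\Omega_1,
\]
with $C_0=2MA/\nu_0$, independent of $q$.

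Lemma~\ref{Holder original} then yields $\mu=\mu(\delta_0)$, the exponent $\alpha$, and the $C^\alpha(\overline{\Omega_{1/2}})$ bound, with dependence only on the listed quantities and not on $q$. The one point to verify carefully is that Lemma~\ref{Holder original} tolerates the modified constant $\Lambda'=\max\{\Lambda,MA/\nu_0\}$, with constants depending on $\Lambda'$. This is standard for the sliding-cusp estimates. If it fails, we will instead rescale $u$ on small balls so that the coefficient $A$ becomes small.
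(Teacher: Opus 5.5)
\textbf{Gap: the dependence of $\alpha$.} Your overall route is the paper's. You apply Lemma~\ref{Holder original} with $\theta=1$ and use \ref{a2} to get $\Phi(x,t)\ge(\nu_0/M)t^{i(\Phi)}$ for $t\ge1$. You treat $\gamma\le i(\Phi)$ directly. For $\gamma>i(\Phi)$ you use subadditivity of $t\mapsto t^{\gamma-i(\Phi)}$ to split off $\|f\|_{L^\infty}|q|^{\gamma-i(\Phi)}\le A$, and absorb the rest into the gradient term of $\mathcal{P}^{\pm}$.

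The gap is in that absorption. Your bound $|D\varphi|^{\gamma-i(\Phi)}\le 1+|D\varphi|$ leaves a linear term with coefficient $MA/\nu_0$. This may exceed $\Lambda$, so you pass to $\Lambda'=\max\{\Lambda,MA/\nu_0\}$. Lemma~\ref{Holder original} applied with $(\lambda,\Lambda')$ then gives $\alpha=\alpha(n,\lambda,\Lambda',\delta_0)$, which depends on $A$, $M$ and $\nu_0$. The statement asserts $\alpha=\alpha(n,\lambda,\Lambda,\delta_0)$, so your closing claim about dependence is not justified. Enlarging $\Lambda$ is harmless for the constant $C$, but not for the exponent. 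The weaker version would still suffice for the compactness argument in Lemma~\ref{approx}, where $A\le 1$, but it is not the lemma as stated. Your fallback of rescaling to small balls could restore the right exponent, but only through a covering argument that you do not supply.

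\textbf{The fix.} Because $0<\gamma-i(\Phi)<1$, the term $\Lambda|Du|$ already present in $\mathcal{P}^{\pm}$ absorbs the growth without changing $\Lambda$. With $\theta:=\gamma-i(\Phi)$,
\[
\inf_{s\ge0}\Bigl(\Lambda s-\tfrac{MA}{\nu_0}s^{\theta}\Bigr)=:-C_1>-\infty,\qquad C_1=C_1(\Lambda,\nu_0,M,\gamma,i(\Phi),A).
\]
Equivalently, use Young's inequality $s^\theta\le\varepsilon s+C_\varepsilon$ with $\varepsilon=\Lambda\nu_0/(MA)$ in place of $s^\theta\le 1+s$. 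This gives $\mathcal{P}^{+}_{\lambda,\Lambda}(D^2u,Du)\ge-C_0$ and $\mathcal{P}^{-}_{\lambda,\Lambda}(D^2u,Du)\le C_0$ on $\{|Du-q|>1\}\cap\Omega_1$. Here $\Lambda$ is the original constant and $C_0=C_1+MA/\nu_0$. So $A,M,\nu_0,\gamma,i(\Phi)$ enter only through $C_0$, hence $C$, and not $\alpha$; this is exactly the paper's argument.

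Your remark about $g$ is well taken. Lemma~\ref{Holder original} requires $\|g\|_{L^\infty}\le C_0$, the statement imposes no bound on $g$, and the paper's proof passes over this silently. So $C$ should also be allowed to depend on $\|g\|_{L^\infty}$, which is at most $1$ wherever the lemma is used.
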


\begin{proof}
If $\left|Du-q\right|>1$, using \ref{a1} and \ref{a2}, we have
\begin{align*}
\mathcal{P}_{\lambda, \Lambda}^{+}(D^{2}u, Du)=\mathcal{M}^{+}_{\lambda, \Lambda}(D^{2}u)+\Lambda|Du|\geq F(D^{2}u)+\Lambda|Du|\geq \frac{f|Du-q|^{\gamma}}{\Phi(x,|Du-q|)}+\Lambda|Du|     
\end{align*}
and
\begin{align*}
\frac{\Phi(x, |Du-q|)}{|Du-q|^{i(\Phi)}}\geq \frac{1}{M}\Phi(x,1)\geq \frac{\nu_{0}}{M}.  
\end{align*}
Thus, we get
\begin{align*}
\mathcal{P}_{\lambda, \Lambda}^{+}(D^{2}u, Du)\geq -\frac{M}{\nu_{0}}\norm{f}_{L^{\infty}}\left|Du-q\right|^{\gamma-i(\Phi)}+\Lambda|Du| \quad \text{in } \left\{\left|Du-q\right|>1\right\}\cap\Omega_{1}.    
\end{align*}
If $i(\Phi)\geq\gamma$, 
\begin{align*}
\mathcal{P}_{\lambda, \Lambda}^{+}(D^{2}u, Du)\geq -\frac{M}{\nu_{0}}\norm{f}_{L^{\infty}}\geq -\frac{M}{\nu_{0}}A\quad \text{in } \left\{\left|Du-q\right|>1\right\}\cap\Omega_{1}.    
\end{align*}
If $i(\Phi)<\gamma$, we obtain
\begin{align*}
\mathcal{P}_{\lambda, \Lambda}^{+}(D^{2}u, Du)&\geq\left(\Lambda|Du|-\frac{M}{\nu_{0}}\norm{f}_{L^{\infty}}|Du|^{\gamma-i(\Phi)}\right)-\frac{M}{\nu_{0}}\norm{f}_{L^{\infty}}|q|^{\gamma-i(\Phi)} \\
&\geq \left(\Lambda|Du|-\frac{M}{\nu_{0}}A|Du|^{\gamma-i(\Phi)}\right)-\frac{M}{\nu_{0}}A\\
&\geq -C(\Lambda, \nu_{0}, M, \gamma, i(\Phi), A) \qquad \text{in } \left\{\left|Du-q\right|>1\right\}\cap\Omega_{1},
\end{align*}
where we have used $0<\gamma-i(\Phi)\leq \gamma<1$. Combining both cases, we deduce that
\begin{align*}
\mathcal{P}_{\lambda, \Lambda}^{+}(D^{2}u, Du) \geq -C(\Lambda, \nu_{0}, M, \gamma, i(\Phi), A) \quad \text{in } \{|Du-q|>1\}\cap\Omega_{1}.
\end{align*}
By an analogous argument, we can also show that
\begin{align*}
\mathcal{P}_{\lambda, \Lambda}^{-}(D^{2}u, Du) \leq C(\Lambda, \nu_{0}, M, \gamma, i(\Phi), A) \quad \text{in } \{|Du-q|>1\}\cap\Omega_{1}.
\end{align*}
Therefore, the desired conclusion follows from Lemma \ref{Holder original}.
\end{proof}

\subsection{Improvement of flatness}

The following lemma provides an approximation result via a compactness argument, which serves as a key ingredient for the improvement of flatness.
\begin{lem}
\label{approx}
Let $\Omega$ be a bounded $C^{1}$ domain whose boundary is represented as in \eqref{boundary repre}. Assume \ref{a1}--\ref{a3}. Let $i(\Phi)\geq0$, $0\leq\gamma<1$ and $A>0$. Assume that  $f\in C(\Omega)\cap L^{\infty}(\Omega)$, $g\in C^{\alpha}(\partial\Omega)$, and $\beta\in C^{\alpha}(\partial\Omega;\R^{n})$ for some $\alpha\in(0,1)$. Let $u$ satisfy $|u|\leq1$ in $\Omega_{1}$ and be a viscosity solution to 
\begin{equation*}
\left\{\begin{aligned}
\Phi\left(x, \left|Du-q\right|\right)F(D^{2}u) &=f|Du-q|^{\gamma} &&\text{in } \Omega_{1}, \\
\beta\cdot Du&=g &&\text{on } \partial\Omega_{1}.
\end{aligned}\right.
\end{equation*}
Given $\delta>0$, there exists $\epsilon=\epsilon(n, \lambda, \Lambda, i(\Phi), M, \nu_{0}, \delta_{0}, \gamma, \delta)>0$ such that if
\begin{align*}
(1+|q|^{(\gamma-i(\Phi))_+})\norm{f}_{L^{\infty}}, \; \norm{g}_{L^{\infty}}, \; [\beta]_{C^{\alpha}}, \;[\varphi]_{C^{1}}\leq\epsilon,    
\end{align*}
then there exists a function $h\in C^{1, \alpha_{0}}\left(\ov{B_{3/4}^{+}}\right)$ such that
\begin{align*}
\norm{u-h}_{L^{\infty}\left(\Omega_{1/2}\right)}\leq \delta, \quad \beta(0) \cdot Dh(0)=0, \quad \text{and} \quad \norm{h}_{C^{1, \alpha_{0}}\left(\ov{B_{3/4}^{+}}\right)}\leq \ov{C_e},  \end{align*}
where $\ov{C_e}=3C_e$, and $C_e$ is the universal constant given in Remark \ref{optimal exponent}.
\end{lem}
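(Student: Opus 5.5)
We argue by contradiction and compactness. If the lemma fails, there is $\delta>0$ and sequences $\Omega^k$ (with graphs $\varphi_k$), $F_k$, $\Phi_k$, $q_k$, $f_k$, $g_k$, $\beta_k$, $u_k$ with $|u_k|\le 1$ solving the problem, such that
\[
(1+|q_k|^{(\gamma-i(\Phi))_+})\norm{f_k}_{L^\infty},\ \norm{g_k}_{L^\infty},\ [\beta_k]_{C^\alpha},\ [\varphi_k]_{C^1}\le 1/k,
\]
yet $\norm{u_k-h}_{L^\infty(\Omega^k_{1/2})}>\delta$ for every admissible $h$. For $k$ large we have $[\varphi_k]_{C^1}\le\mu$. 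Lemma \ref{holder} with $A=1$ then gives a uniform $C^{\alpha'}$ bound on $\ov{\Omega^k_{1/2}}$ that does not depend on $q_k$; applying it at smaller scales around nearby boundary and interior points gives a uniform bound on compact subsets of $\ov{\Omega^k_{1}}$ near $\ov{B_{3/4}^+}$. Since $\varphi_k\to0$ in $C^1$ ($\varphi_k(0)=0$), the domains converge to $B_1^+$. By Arzel\`a--Ascoli (after extending $u_k$ continuously), a subsequence satisfies $u_k\to u_\infty$ locally uniformly in $\ov{B^+_{3/4}}$ (say on $B_{7/8}^+$). Passing to subsequences, we also get $F_k\to F_\infty$ locally uniformly ($F_k$ are uniformly Lipschitz with $F_k(0)=0$), $\beta_k\to\beta_0$ uniformly with $\beta_0$ constant (the $C^\alpha$ seminorms vanish and $|\beta_k|\le1$), $\beta_0\cdot e_n\ge\delta_0$, and $q_k\to q_\infty$ or $|q_k|\to\infty$.

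The main step is to identify the limit equation: we want $F_\infty(D^2u_\infty)=0$ in $B^+_{3/4}$ and $\beta_0\cdot Du_\infty=0$ on $T_{3/4}$ in the viscosity sense. The boundary condition passes to the limit by the standard stability argument, since $g_k\to0$ and $\beta_k\to\beta_0$. The one subtlety is that touching points of perturbed test functions may lie in the interior, so we use the usual strict-test-function trick (adding $\pm\varepsilon/x_n$-type or $-\eta x_n$ corrections and using the obliqueness $\beta_0\cdot e_n\ge\delta_0$), as in \cite{BKO25}.

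For the interior equation, the degenerate factor $\Phi_k(x,|D\phi-q_k|)$ is the main obstacle. Let $P$ be a quadratic polynomial touching $u_\infty$ from above at $x_0$, and suppose $F_\infty(D^2P)<0$. We follow the argument of \cite{IS2013} and \cite{BBLL24a}.

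\textbf{Case 1:} $D P(x_0)-q_k$ stays away from zero along the sequence (this always holds if $|q_k|\to\infty$). Perturb $P$ to $P_k$ touching $u_k$ at $x_k\to x_0$. Dividing by $\Phi_k(x_k,|DP_k(x_k)-q_k|)$, which by \ref{a2} is bounded below by $c\min(t^{i},t^{s})$ with $t$ bounded away from $0$, we get
\[
F_k(D^2P)\ \ge\ -\frac{|f_k|\,|DP_k-q_k|^\gamma}{\Phi_k}\ \lesssim\ \norm{f_k}\bigl(1+|q_k|^{(\gamma-i)_+}\bigr)\to0.
\]
Here we use $\gamma<1$ and $i\ge0$; the factor $(1+|q|^{(\gamma-i)_+})$ in the hypothesis is exactly what makes this uniform when $|q_k|$ is large. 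This contradicts $F_\infty(D^2P)<0$.

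\textbf{Case 2:} $q_k\to q_\infty=DP(x_0)$. Following the trick of \cite{IS2013}, we instead test with $P(x)+e\cdot(x-x_0)$ for small vectors $e$ chosen so that $|DP(x_0)+e-q_k|$ stays bounded below. The touching is kept local by first making the touching strict with $|x-x_0|^4$. Letting $e\to0$ yields $F_\infty(D^2P)\ge0$. If $D^2P$ has no negative-definite structure preventing the argument, the case where $u_\infty-P$ is locally flat is handled by the alternative (a) in the definition of viscosity solution, or via the standard reduction to the case where $D^2P$ has a positive eigenvalue.

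The supersolution property is symmetric.

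Once the limit equation is established, Remark \ref{optimal exponent} gives $\norm{u_\infty}_{C^{1,\alpha_0}(\ov{B_{3/4}^+})}\le C_e$ and $\beta_0\cdot Du_\infty(0)=0$. Set
\[
h=u_\infty+\ell(x),
\]
with a small linear correction $\ell$ chosen so that $\beta_k(0)\cdot Dh(0)=0$ (possible since $\beta_k(0)\to\beta_0$ and $\beta_k(0)\cdot e_n\ge\delta_0/2$). For large $k$ this keeps $\norm{h}_{C^{1,\alpha_0}}\le 3C_e$ and $\norm{u_k-h}_{L^\infty(\Omega^k_{1/2})}<\delta$, which is a contradiction. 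The values of $u_k$ on the thin region $\Omega^k_{1/2}\setminus B^+$ are controlled by the uniform H\"older estimate together with the extension of $h$.
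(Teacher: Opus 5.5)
Your architecture matches the paper's proof. You argue by contradiction, get compactness from Lemma~\ref{holder} uniformly in $q_k$, identify the limit problem, and apply a small linear correction to $u_\infty$ to enforce $\beta_k(0)\cdot Dh(0)=0$. The paper instead projects $Du_\infty(0)$ onto $\beta_k(0)$, but your correction works equally well. Your case split, on whether $DP(x_0)-q_k$ stays away from $0$, is finer than the paper's (unbounded versus bounded $q_k$), and your Case~1 is fine.

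The gap is Case~2, $q_k\to DP(x_0)$. This is exactly the degenerate situation the paper delegates to \cite[Lemma 4.1]{BBLL24a}, and the tilting argument you describe does not close it. The viscosity inequality is applied at the maximum point $x_k^e$ of $u_k-P-e\cdot(x-x_0)$. There the relevant vector is $DP(x_0)-q_k+D^2P\,(x_k^e-x_0)+e$, not $DP(x_0)+e-q_k$. Tilting moves the maximum point: with strictness enforced by $|x-x_0|^4$, the displacement can be of order $|e|^{1/3}\gg|e|$. So $D^2P\,(x_k^e-x_0)$ can cancel $e$, and nothing keeps the gradient away from $q_k$. Alternative (a) of the definition does not help, since it concerns $u_k$ being constant on a ball, not $u_\infty-P$ being flat. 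The eigenvalue reduction you mention at the end is the right hint, but it is never carried out.

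The missing idea is the device from \cite{IS2013}. For subsolutions, make the touching strict with $\sigma|x-x_0|^2$, with $\sigma$ small, so that $P$ stays quadratic and still $F_\infty(D^2P)<0$.
\begin{itemize}
\item Then $\mathcal{M}^-_{\lambda,\Lambda}(D^2P)<0$, so $D^2P$ has a negative eigenvalue. Let $E$ be the span of the corresponding eigenvectors and $\pi_E$ the orthogonal projection onto $E$.
\item Test with $P_\eta:=P-\eta|\pi_E(x-x_0)|$ for a small fixed $\eta$, and let $x_k$ be a maximum point of $u_k-P_\eta$.
\item If $\pi_E(x_k-x_0)\neq0$, then $P_\eta$ is smooth near $x_k$ and $D^2P_\eta(x_k)\le D^2P$, because the cone is convex. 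With $e_k=\pi_E(x_k-x_0)/|\pi_E(x_k-x_0)|$ one gets $-e_k\cdot(DP_\eta(x_k)-q_k)\ge\eta-|DP(x_0)-q_k|\ge\eta/2$. This uses $e_k\cdot D^2P\,(x_k-x_0)=|\pi_E(x_k-x_0)|\,e_k\cdot D^2P\,e_k<0$.
\item If $\pi_E(x_k-x_0)=0$, then $P-\eta\,e\cdot(x-x_k)$ touches $u_k$ from above at $x_k$ for every unit $e\in E$. Choosing the sign of $e$ gives $|DP(x_k)-\eta e-q_k|\ge\eta$, with Hessian exactly $D^2P$.
\end{itemize}
In either case your Case~1 estimate applies with $t\ge\eta/2$. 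It yields $F_\infty(D^2P)\ge0$, which is a contradiction. For supersolutions, use the positive eigenspace and $+\eta|\pi_E(x-x_0)|$.

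Two smaller points. First, the lower bound $\Phi_k(x,t)\ge(\nu_0/M)\,t^{s(\Phi_k)}$ for $t<1$ is needed in Case~1 when $|DP(x_0)-q_\infty|<1$, and in Case~2. It is uniform only if $s(\Phi_k)$ is held fixed, so your $\epsilon$ depends on $s(\Phi)$; say so explicitly. This dependence cannot be avoided, so (C1) and the dependence list in the statement should include $s(\Phi)$. For example, take $\Phi_k(t)=t^{k}$, $i(\Phi)=0$, $f_k=\frac1k\min(1,(x_1)_+)$, with $\beta=e_n$ and $g=0$ on $\{x_n=0\}$. These admit one-dimensional solutions $u_k=w_k(x_1)$ with $w_k=0$ for $x_1\le0$, converging uniformly to $(x_1)_+$. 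That limit is not $\delta$-close to any $h$ with $\norm{h}_{C^{1,\alpha_0}}\le 3C_e$ once $\delta$ is small. Second, obtain the convergence and the limit problem on all of $B_1^+\cup T_1$ by a diagonal argument, rather than only on $B^+_{7/8}$. Remark~\ref{optimal exponent}, with constant $C_e$ on $B^+_{3/4}$, requires the equation in $B_1^+$.
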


\begin{proof}
For a contradiction, assume that the conclusion does not hold. Then there exist a $\ov{\delta}>0$ and sequences $\{\Phi_{k}\}^{\infty}_{k=1}, \{F_{k}\}^{\infty}_{k=1}, 
 \{f_{k}\}^{\infty}_{k=1}, \{g_{k}\}^{\infty}_{k=1}, \{\beta_{k}\}^{\infty}_{k=1}, \{q_{k}\}^{\infty}_{k=1}, \{u_{k}\}^{\infty}_{k=1}$, and $\{\Omega_{k}\}^{\infty}_{k=1}$ such that 
\begin{description}
\item[(C1)] $\Phi_{k} \in C((\Omega_{k})_{1}\times (0,\infty))$ satisfies \ref{a2} with the parameters $i(\Phi)$, $s(\Phi_{k})$, $M$, $\nu_{0}$, and $\nu_{1,k}$;
\item[(C2)\label{c2}] $F_{k}$ is uniformly $(\lambda,\Lambda)$-elliptic and $F_{k}(0)=0$;
\item[(C3)\label{c3}] $f_{k}\in C\left(\Omega_{k}\right) \cap L^{\infty}(\Omega_{k})$ with $(1+|q_k|^{(\gamma-i(\Phi))_+})\norm{f_{k}}_{L^{\infty}((\Omega_{k})_{1})}\leq 1/k$, $g_{k}\in C^{\alpha}(\partial\Omega_{k})$ 
 with $\norm{g_{k}}_{L^{\infty}(\partial(\Omega_{k})_{1})}\leq 1/k$, $\beta_{k}\in C^{\alpha}\left(\partial\Omega_{k};\R^{n}\right)$  with $[\beta_{k}]_{C^{\alpha}(\partial(\Omega_{k})_{1})}\leq 1/k$, $\varphi_{k}\in C^{1}(T_{1})$ with $[\varphi_{k}]_{C^{1}(T_{1})}\leq 1/k$;
\item[(C4)] $u_{k}$ with $\norm{u_{k}}_{L^{\infty}((\Omega_{k})_{1})}\leq 1$ solves
\begin{equation}
\left\{\begin{aligned}
\label{2}
\Phi_{k}\left(x, \left|Du_{k}-q_k\right|\right)F_{k}(D^{2}u_{k})&=f_{k}|Du_{k}-q_{k}|^{\gamma} &&\text{in } (\Omega_{k})_{1},\\
\beta_{k}\cdot Du_{k}&=g_k &&\text{on } \partial(\Omega_{k})_{1},\\
\end{aligned}\right.
\end{equation}
but $\norm{u_{k}-h}_{L^{\infty}}>\ov{\delta}$ for any $k\in \mathbb{N}$ and $h\in C^{1,\alpha_{0}}\left(\overline{B_{3/4}^{+}}\right)$ satisfying $\beta_k(0) \cdot Dh(0)=0$ and $\norm{h}_{C^{1, \alpha_{0}}\left(\overline{B_{3/4}^{+}}\right)}\leq \ov{C_e}$.
\end{description}
Note that by \ref{c2} and the Arzel\`a--Ascoli theorem, $F_{k}$ converges locally uniformly to some $F_{\infty}$ satisfying \ref{a1}.
By the boundary H\"older estimates in Lemma \ref{holder} and the Arzel\`a--Ascoli theorem, there exists a function $u_{\infty}\in C(\ov{B_{1}^{+}})$ such that $u_{k}$ converges to $u_{\infty}$ locally uniformly up to a subsequence.
Additionally, $\beta_{k}\to \beta_\infty$ for some constant vector $\beta_\infty \in \R^n$ and $(\Omega_{k})_{1}\to B_{1}^{+}$.
Now we claim that $u_{\infty}$ is a viscosity solution to
\begin{equation*}
\left\{\begin{aligned}
F_{\infty}(D^{2}u_{\infty}) &=0 &&\text{in } B_{1}^{+}, \\
\beta_\infty\cdot Du_{\infty}&=0 &&\text{on } T_{1}.
\end{aligned}\right.
\end{equation*}
Let 
\begin{align*}
\psi(x)=\frac{1}{2}M(x-y)\cdot(x-y)+b\cdot(x-y)+u_{\infty}(y)    
\end{align*}
be a quadratic polynomial touching $u_{\infty}$ from below at a point $y$. We may assume that $\psi$ touches $u_{\infty}$ strictly from below at $y$, and for simplicity, that $|y|=u_{\infty}(y)=0$.  Then there exists a sequence $x_{k}\to 0$ as $k\to\infty$ such that $u_{k}-\psi$ has a local minimum at $x_{k}$. Note that $D\psi(x_{k})\to b$ and $D^{2}\psi(x_{k})=M$. Since $u_{k}$ satisfies \eqref{2} in the viscosity sense, we have
\begin{align}
\label{vis ineq}
\Phi_{k}\left(x_{k}, \left|D\psi(x_{k})-q_{k}\right|\right)F_{k}(D^{2}\psi(x_{k}))\leq f_{k}(x_{k})|D\psi(x_{k})-q_{k}|^{\gamma}.    
\end{align}
We divide the argument depending on the boundedness of the sequence $\{q_{k}\}^{\infty}_{k=1}$.

\medskip
\noindent Case I: The sequence $\{q_{k}\}^{\infty}_{k=1}$ is unbounded. In this case, we may assume $|q_{k}|\to \infty$ as $k\to\infty$ (up to a subsequence). Then we have
\begin{align*}
\frac{3}{2}|q_{k}|\geq\left|D\psi(x_{k})-q_{k}\right|\geq\frac{1}{2} |q_{k}|\geq1,    \end{align*}
for sufficiently large $k$. From \eqref{vis ineq}, \ref{a2} and \ref{c3}, we obtain
\begin{align*}
 F_{k}(D^{2}\psi(x_{k})) &\leq\frac{f_{k}(x_{k})\left|D\psi(x_{k})-q_{k}\right|^{\gamma}}{\Phi_{k}\left(x_{k}, \left|D\psi(x_{k})-q_{k}\right|\right)} \\&\leq \frac{M}{\nu_{0}}\norm{f_{k}}_{L^{\infty}}\left|D\psi(x_{k})-q_{k}\right|^{\gamma-i(\Phi)} \\  
&\leq \frac{M}{\nu_{0}}\norm{f_{k}}_{L^{\infty}}\left|\frac{3}{2}q_{k}\right|^{(\gamma-i(\Phi))_{+}} \\
&\leq \frac{M}{\nu_{0}}\left(\frac{
3}{2}\right)^{(\gamma-i(\Phi))_{+}}\frac{1}{k}.
\end{align*}
Thus
\begin{align*}
F_{\infty}(M)=\lim_{k\to\infty}F_{k}(D^{2}\psi(x_{k}))\leq0.    
\end{align*}
\noindent Case II: The sequence $\{q_{k}\}^{\infty}_{k=1}$ is bounded. 
In this case, we may assume $|q_{k}|\to \xi$ as $k\to \infty$ (up to a subsequence). Then we have
\begin{align*}
\left|D\psi(x_{k})-q_{k}\right|\leq 2(|b|+|\xi|),    \end{align*}
for sufficiently large $k$. From \ref{c3}, we get
\begin{align*}
f_{k}(x_{k})|D\psi(x_{k})-q_{k}|^{\gamma}\leq \norm{f_{k}}_{L^{\infty}} 2^{\gamma}(|b|+|\xi|)^{\gamma}\leq2^{\gamma}(|b|+|\xi|)^{\gamma}/k \to 0.   
\end{align*}
Then, from \eqref{vis ineq}, we can follow the arguments in \cite[Lemma 4.1]{BBLL24a} with minor modifications to deduce that $F_{\infty}(M)\leq0$. 

In both cases, we have $F_{\infty}(M)\leq0$, implying that $u_{\infty}$ is a supersolution. The subsolution case can also be proved in a similar manner. Finally, applying the stability property of viscosity solutions to the oblique boundary condition yields the claim.

Therefore, since $u_\infty$ is a solution to the homogeneous equation with a constant oblique boundary condition, Remark \ref{optimal exponent} yields
\begin{equation*}
    \norm{u_\infty}_{C^{1, \alpha_{0}}\left(\overline{B_{3/4}^{+}}\right)} \leq C_{e}\norm{u_{\infty}}_{L^{\infty}(B_{1}^{+})}\leq C_e.
\end{equation*}
Moreover, noting that $|Du_\infty(0)| \leq C_e$, we define
\begin{equation*}
    a_{k} := \frac{\beta_{k}(0)\cdot Du_{\infty}(0)}{|\beta_{k}(0)|^{2}}\beta_{k}(0).
\end{equation*}
It follows that $|a_k| \leq C_e$ and
\begin{equation*}
    \beta_k(0) \cdot (Du_\infty(0)-a_k) = 0.
\end{equation*}
Since $\beta_k(0) \to \beta_\infty$ as $k \to \infty$ and $\beta_\infty \cdot Du_\infty(0) = 0$, we deduce that $a_k \to 0$. 
Thus, defining $h_k(x) := u_\infty(x) - a_k \cdot x$, we obtain $\beta_k(0) \cdot Dh_k(0) = 0$ and
\begin{equation*}
    \norm{h_k}_{C^{1, \alpha_{0}}\left(\overline{B_{3/4}^{+}}\right)} \leq \norm{u_{\infty}}_{C^{1, \alpha_{0}}\left(\overline{B_{3/4}^{+}}\right)} + \norm{a_{k}\cdot x}_{C^{1, \alpha_{0}}\left(\overline{B_{3/4}^{+}}\right)} \leq C_e + 2C_e = 3C_e = \ov{C_e}.
\end{equation*}
Finally, observing that
\begin{equation*}
    \norm{u_k-h_k}_{L^\infty\left(\Omega_{1/2}\right)} \leq \norm{u_k-u_\infty}_{L^\infty\left(\Omega_{1/2}\right)} + |a_k| \to 0 \quad \text{as } k \to \infty,
\end{equation*}
we reach a contradiction for sufficiently large $k$, which completes the proof.
\end{proof}

With Lemma \ref{approx} in hand, we are now ready to establish the improvement of flatness. The following proposition shows that if the given data are sufficiently small, the solution can be closely approximated by an affine function in a smaller domain.

\begin{prop}
\label{first step} 
Let $\Omega$ be a bounded $C^{1}$ domain whose boundary is represented as in \eqref{boundary repre}. Assume \ref{a1}--\ref{a3}. Let $i(\Phi)\geq0$ and $0\leq\gamma<1$. Assume that  $f\in C(\Omega)\cap L^{\infty}(\Omega)$, $g\in C^{\alpha}(\partial\Omega)$, and $\beta\in C^{\alpha}(\partial\Omega;\R^{n})$ for some $\alpha\in(0,1)$.
Let $u$ satisfy $|u|\leq1$ in $\Omega_{1}$ and be a viscosity solution to 
\begin{equation*}
\left\{\begin{aligned}
\Phi\left(x, \left|Du-q\right|\right)F(D^{2}u) &=f|Du-q|^{\gamma} &&\text{in } \Omega_{1}, \\
\beta\cdot Du&=g &&\text{on } \partial\Omega_{1}.
\end{aligned}\right.
\end{equation*} 
Given $\ov{\alpha}$ satisfying $$\ov{\alpha}\in(0, \alpha_{0})\cap\left(0, \frac{1}{1+s(\Phi)-\min(i(\Phi),\gamma)}\right]\cap(0, \alpha],$$
there exist constants $0<r<1/2$ and $\epsilon>0$ such that if 
\begin{align*}
(1+|q|^{(\gamma-i(\Phi))_+})\norm{f}_{L^{\infty}}, \; \norm{g}_{L^{\infty}}, \; [\beta]_{C^{\alpha}}, \; [\varphi]_{C^{1}}\leq \epsilon,    
\end{align*}
then there exists an affine function $l=u(0)+b\cdot x$ such that
\begin{align*}
\norm{u-l}_{L^{\infty}(\Omega_{r})}\leq r^{1+\ov{\alpha}}, \; \beta(0)\cdot b=0, \quad \text{and} \quad |b|\leq \ov{C_{e}},    
\end{align*}
where $\ov{C_e}$ is the constant defined in Lemma \ref{approx}.
\end{prop}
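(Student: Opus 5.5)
We apply Lemma \ref{approx} with a $\delta$ chosen at the end, and take the affine function $l$ to be the first-order Taylor polynomial of the approximating $h$ at the origin. First we fix $r\in(0,1/2)$ using only the regularity of $h$. Then we fix $\delta$ depending on $r$. Finally we take $\epsilon=\epsilon(\delta)$ from Lemma \ref{approx}.

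Let $h\in C^{1,\alpha_0}(\ov{B^+_{3/4}})$ be the function given by Lemma \ref{approx}, so that
\begin{align*}
\norm{u-h}_{L^\infty(\Omega_{1/2})}\leq\delta,\qquad \beta(0)\cdot Dh(0)=0,\qquad \norm{h}_{C^{1,\alpha_0}}\leq\ov{C_e}.
\end{align*}
The natural choice $b:=Dh(0)$ gives $\beta(0)\cdot b=0$ and $|b|\leq\ov{C_e}$ at once. Since the statement requires $l=u(0)+b\cdot x$, we use $u(0)$ rather than $h(0)$ as the constant term. This costs $|u(0)-h(0)|\leq\delta$, because $0\in\partial\Omega$ lies in the closure of $\Omega_{1/2}$ and both functions are continuous there.

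The $C^{1,\alpha_0}$ bound on $h$ gives $|h(x)-h(0)-Dh(0)\cdot x|\leq\ov{C_e}|x|^{1+\alpha_0}$ wherever $h$ is defined. For $x\in\Omega_r$ we then get
\begin{align*}
|u(x)-l(x)|\leq |u-h|(x)+|h(x)-h(0)-b\cdot x|+|h(0)-u(0)|\leq 2\delta+\ov{C_e}\,r^{1+\alpha_0}.
\end{align*}
Because $\ov\alpha<\alpha_0$, we can choose $r$ small enough that $\ov{C_e}r^{1+\alpha_0}\leq\frac12 r^{1+\ov\alpha}$, that is, $r^{\alpha_0-\ov\alpha}\leq 1/(2\ov{C_e})$. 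We then take $\delta=\frac14 r^{1+\ov\alpha}$, and the bound becomes $|u-l|\leq r^{1+\ov\alpha}$ on $\Omega_r$. The remaining restrictions on $\ov\alpha$ (the upper bound $1/(1+s(\Phi)-\min(i(\Phi),\gamma))$ and $\ov\alpha\le\alpha$) play no role at this single step. They are needed only later, when the proposition is iterated and the rescaled data must stay below $\epsilon$.

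The main obstacle is domain compatibility. The function $h$ lives on $\ov{B^+_{3/4}}$, whereas $\Omega_r$ lies above the graph of $\varphi$, which need not be contained in $\{x_n\ge0\}$. Lemma \ref{approx} itself must already compare $u$ with $h$ on $\Omega_{1/2}$, so the intended reading is that $h$ is understood on a neighborhood of both sets. Concretely, we extend $h$ to a $C^{1,\alpha_0}$ function on $B_{3/4}$, for instance by a standard reflection or extension across $T_{3/4}$. This keeps its $C^{1,\alpha_0}$ norm bounded by a multiple of $\ov{C_e}$, and the Taylor estimate then holds on all of $\Omega_r$ with that constant in place of $\ov{C_e}$. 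This only changes the choice of $r$ by a universal factor. Alternatively, one can use that $[\varphi]_{C^1}\leq\epsilon$ is small, so $\Omega_r$ lies within distance $\epsilon r$ of $B_r^+$, and absorb the discrepancy using the uniform continuity of $Dh$. Either way, the constants depend only on the data listed in Lemma \ref{approx} and on $\ov\alpha$.
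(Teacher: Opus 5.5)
Your proposal is correct and follows the paper's proof essentially verbatim: take $b=Dh(0)$ from Lemma \ref{approx}, combine the Taylor bound $|h(x)-h(0)-Dh(0)\cdot x|\le \ov{C_e}|x|^{1+\alpha_0}$ with $|u(0)-h(0)|\le\delta$, and choose $r$ so that $\ov{C_e}r^{1+\alpha_0}$ is a fixed fraction of $r^{1+\ov\alpha}$ (the paper uses $\frac13$ where you use $\frac12$); then fix $\delta$ comparable to $r^{1+\ov\alpha}$ and $\epsilon=\epsilon(\delta)$. Your remark about extending $h$ below $T_{3/4}$ addresses a domain-compatibility point the paper leaves implicit, and it changes $r$ only by a universal factor.
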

\begin{proof}
   For a given $\delta>0$ to be determined later, let $\epsilon>0$ be the constant from Lemma \ref{approx}.
   Then, there exists a function $h\in C^{1, \alpha_{0}}$ such that $\norm{u-h}_{L^{\infty}}\leq \delta$, $\beta(0) \cdot Dh(0)=0$, and $\norm{h}_{C^{1, \alpha_{0}}}\leq \ov{C_e}$.
   Note that $|Dh(0)| \leq \ov{C_e}$. Let $\tilde{l}(x) =h(0)+ Dh(0)\cdot x$.
   It follows that
   \begin{align*}
       |h(x)-\tilde{l}(x)| \leq \ov{C_{e}} |x|^{1+\alpha_0}.
   \end{align*}
   We then select $0<r<\frac{1}{2}$ sufficiently small such that $\ov{C_e} r^{1+\alpha_0} \leq \frac{1}{3}r^{1+\ov{\alpha}}$, and set $\delta = \frac{1}{3}r^{1+\ov{\alpha}}$.
   Defining $l(x) =u(0)+ Dh(0)\cdot x$, we obtain
   \begin{align*}
       \norm{u-l}_{L^{\infty}(\Omega_{r})} &\leq \norm{u-h}_{L^{\infty}(\Omega_{r})} +  \norm{h-\tilde{l}}_{L^{\infty}(\Omega_{r})} +|u(0)-h(0)| \\
       &\leq 2\delta + \ov{C_e}r^{1+\alpha_0} \leq r^{1+\ov{\alpha}},
   \end{align*}
   which completes the proof.
\end{proof}

\section{Proof of Theorem \ref{thm1}}
\label{sec4}

\begin{proof}[Proof of Theorem \ref{thm1}]
In light of Lemma \ref{transform} and Remark \ref{unified}, we may assume that $u$ is a viscosity solution to
\begin{equation*}
\left\{\begin{aligned}
\Phi(x, |Du|)F(D^{2}u) &=f|Du|^{\gamma} &&\text{in } \Omega\cap B_{1}, \\
\beta\cdot Du&=g &&\text{on } \partial\Omega\cap B_{1},
\end{aligned}\right.
\end{equation*}
where $i(\Phi)\geq 0$ and $0\leq \gamma<1$. Let us fix an exponent $\ov{\alpha}$ satisfying
\begin{equation*}
\ov{\alpha}\in (0, \alpha_{0})\cap\left(0, \frac{1}{1+s(\Phi)-\min(i(\Phi), \gamma)}\right]\cap(0, \alpha].
\end{equation*}
Note that, in view of Remark \ref{unified}, this range of $\ov{\alpha}$ precisely corresponds to the range \eqref{alpha range} in Theorem \ref{thm1}.

Let $r$ and $\epsilon$ be the constants given in Proposition \ref{first step}. By a suitable rotation and translation of the coordinates, we may assume that $0 \in \partial\Omega$ and the domain $\Omega_1$ is given by $\{x \in B_1 : x_n > \varphi_\Omega(x')\}$ for some $C^1$ function $\varphi=\varphi_\Omega$ with $\varphi(0) = 0$ and $D\varphi(0) = 0$.

Proceeding as in \cite[Section 5]{BKO25}, we may further assume that $u$ satisfies 
\begin{equation*}
\left\{\begin{aligned}
\Phi(x, |Du-q|)F(D^{2}u) &=f|Du-q|^{\gamma} &&\text{in } \Omega_{1}, \\
\beta\cdot Du&=g &&\text{on } \partial\Omega_{1},
\end{aligned}\right.
\end{equation*}
for some vector $q \in \mathbb{R}^n$, with the constants $\nu_0$ and $\nu_1$ in assumption \ref{a2} normalized to $1$. Additionally, we may assume that the solution satisfies $u(0)=0$ and $\norm{u}_{L^\infty} \leq 1$, the boundary data satisfies $g(0)=0$, and the following smallness conditions hold:
\begin{equation*}
\norm{f}_{L^{\infty}}\leq\frac{1-\left(1/2\right)^{\ov{\alpha}}}{4\ov{C_{e}}M}\epsilon, \quad [g]_{C^{\alpha}}\leq \frac{\epsilon}{2}, \quad [\beta]_{C^{\alpha}}\leq\frac{1-r^{\ov{\alpha}}}{2\ov{C_{e}}}\epsilon, \quad [\varphi]_{C^{1}}\leq\epsilon, \quad \text{and} \quad |q|\leq\frac{\ov{C_{e}}}{1-r^{\ov{\alpha}}}.
\end{equation*}
Indeed, to justify this, let $0<s<1$ and $K>1$ be constants to be determined. We define the rescaled quantities
\begin{align*}
\widetilde{u}(x)&:=\frac{u(sx)-u(0)}{K}-\frac{sg(0)}{K\beta_{n}(0)}x_{n}, \\
\widetilde{\Phi}(x,t)&:=\frac{\Phi(sx, \frac{K}{s}t)}{\Phi(sx, \frac{K}{s})}, \quad \quad q:=-\frac{sg(0)}{K\beta_{n}(0)}e_{n}, \\
\widetilde{F}(M)&:=\frac{s^{2}}{K}F\left(\frac{K}{s^{2}}M\right), \quad \widetilde{f}(x):=\frac{s^{2-\gamma}f(sx)}{K^{1-\gamma}\Phi(sx, \frac{K}{s})}, \\
\widetilde{\beta}(x)&:=\beta(sx), \quad \widetilde{g}(x):=\frac{s}{K}g(sx)-\frac{sg(0)}{K\beta_{n}(0)}\beta_{n}(sx), \\
\widetilde{\varphi}(x)&:=\frac{\varphi(sx)}{s}(=\varphi_{\frac{1}{s}\Omega}(x)).
\end{align*}
Then, by the scaling properties (Section \ref{scaling property}), $\widetilde{u}$ solves
\begin{equation*}
\left\{\begin{aligned}
\widetilde{\Phi}(x, |D\widetilde{u}-q|)\widetilde{F}(D^{2}\widetilde{u}) &=\widetilde{f}|D\widetilde{u}-q|^{\gamma} &&\text{in } \left(\frac{1}{s}\Omega\right)\cap B_{1}, \\
\widetilde{\beta}\cdot D\widetilde{u}&=\widetilde{g} &&\text{on } \partial\left(\frac{1}{s}\Omega\right)\cap B_{1}.
\end{aligned}\right.
\end{equation*}
Since $\varphi\in C^{1}$ and $D\varphi(0)=0$, we can choose $s \in (0,1)$ sufficiently small, depending only on $[\beta]_{C^{\alpha}}$ and the $C^{1}$ modulus of $\varphi$, such that
\begin{equation*}
|D\varphi(x)| \leq \epsilon \quad \text{for any } x\in \ov{B_{s}},   
\end{equation*}
which yields $[\widetilde{\varphi}]_{C^{1}} \leq \epsilon$, and simultaneously,
\begin{equation} \label{estimate of beta holder norm}
[\widetilde{\beta}]_{C^{\alpha}} = s^{\alpha}[\beta]_{C^{\alpha}} \leq \frac{1-r^{\ov{\alpha}}}{2\ov{C_{e}}}\epsilon.
\end{equation}
Next, we define $K > 1$ by
\begin{equation*}
K := 1+4\norm{u}_{L^{\infty}}+\left(\frac{4\ov{C_{e}}M^{2}}{(1-(1/2)^{\ov{\alpha}})\nu_{0}\epsilon}\norm{f}_{L^{\infty}}\right)^{1/ (1-\gamma+i(\Phi))}+\left(\frac{2}{\epsilon}+\frac{2}{\delta_{0}}\right)\norm{g}_{C^{\alpha}}.
\end{equation*}
Noting that 
\begin{equation}
\label{beta(0)}
\beta(0)\cdot \mathbf{n}(0)=\beta_{n}(0)\geq\delta_{0},
\end{equation}
we clearly have $\widetilde{u}(0)=0$ and
\begin{equation*}
\norm{\widetilde{u}}_{L^{\infty}}\leq\frac{2\norm{u}_{L^{\infty}}}{K}+ \frac{\norm{g}_{L^{\infty}}}{K\delta_{0}}\leq 1.
\end{equation*}
Furthermore, since $K/s \geq 1$, \ref{a2} implies
\begin{equation*}
\Phi\left(sx, \frac{K}{s}\right)\geq \frac{1}{M}\left(\frac{K}{s}\right)^{i(\Phi)}\Phi(sx,1)\geq \frac{\nu_{0}}{M}\left(\frac{K}{s}\right)^{i(\Phi)}.
\end{equation*}
Using this bound, we deduce
\begin{equation*}
\norm{\widetilde{f}}_{L^{\infty}}\leq \frac{Ms^{2-\gamma+i(\Phi)}}{\nu_{0}K^{1-\gamma+i(\Phi)}}\norm{f}_{L^{\infty}} \leq \frac{M}{\nu_{0}K^{1-\gamma+i(\Phi)}}\norm{f}_{L^{\infty}}\leq \frac{1-(1/2)^{\ov{\alpha}}}{4\ov{C_{e}}M}\epsilon.
\end{equation*}
For the boundary data, we have $\widetilde{g}(0)=0$, and combining \eqref{beta(0)} and \eqref{estimate of beta holder norm}, we obtain
\begin{align*}
[\widetilde{g}]_{C^{\alpha}} &\leq \frac{s^{\alpha+1}}{K}[g]_{C^{\alpha}}+\frac{s^{\alpha+1}\norm{g}_{L^{\infty}}}{K\delta_{0}}[\beta]_{C^{\alpha}} \\
&\leq \frac{1}{K}\left(1+\frac{\epsilon}{2\ov{C_{e}}\delta_{0}}\right)\norm{g}_{C^{\alpha}} \leq \frac{\epsilon}{2}.
\end{align*} 
Finally, by \eqref{beta(0)}, we have
\begin{equation*}
|q|\leq \frac{\norm{g}_{L^{\infty}}}{K\delta_{0}}\leq \frac{1}{2} \leq \frac{\ov{C_{e}}}{1-r^{\ov{\alpha}}}.
\end{equation*}

We now claim that there exists a sequence of linear functions $l_{k}(x)=b_{k}\cdot x$ with $b_{k} \in \mathbb{R}^{n}$, such that for every integer $k \geq 0$:
\begin{equation} \label{induction}
\begin{cases}
\norm{u-l_{k}}_{L^{\infty}\left(\Omega_{r^{k}}\right)}\leq r^{k(1+\ov{\alpha})}, \\
\beta(0)\cdot b_{k} = 0, \\
|b_{k}-b_{k+1}|\leq \ov{C_{e}}r^{k\ov{\alpha}}.
\end{cases}
\end{equation}
We proceed by induction. For $k=0$, the claim holds trivially by choosing $l_{0}=0$, since $\norm{u}_{L^{\infty}}\leq 1$.
Suppose that the induction hypotheses \eqref{induction} hold up to some integer $k \geq 0$. We consider the normalized function
\begin{equation*}
v_k(x) := \frac{(u-l_{k})(r^{k}x)}{r^{k(1+\ov{\alpha})}}.   
\end{equation*}
By the scaling introduced in Section \ref{scaling property}, $v_{k}$ solves
\begin{equation*}
\left\{\begin{aligned}
    \Phi_{k}\left(x,\left|Dv_{k}-q_k\right|\right)F_{k}(D^{2}v_{k})&=f_{k}\left|Dv_{k}-q_k \right|^{\gamma} &&\text{in } \left(\frac{1}{r^{k}}\Omega\right)\cap B_{1},\\
    \beta_{k}\cdot Dv_{k}&=g_{k} &&\text{on } \partial\left(\frac{1}{r^{k}}\Omega\right)\cap B_{1},
\end{aligned}\right.
\end{equation*}
where the rescaled quantities are given by
\begin{align*}
\Phi_{k}(x,t) &:= \frac{\Phi(r^{k}x, r^{k\ov{\alpha}}t)}{\Phi(r^{k}x, r^{k\ov{\alpha}})}, \quad \quad q_k := \frac{q-b_{k}}{r^{k\ov{\alpha}}}, \\
F_{k}(M) &:= r^{k(1-\ov{\alpha})}F(r^{k(\ov{\alpha}-1)}M), \quad f_{k}(x) := \frac{r^{k(1-\ov{\alpha}(1-\gamma))}f(r^{k}x)}{\Phi(r^{k}x, r^{k\ov{\alpha}})}, \\
\beta_{k}(x) &:= \beta(r^{k}x), \quad \quad \quad \quad \quad \quad g_{k}(x) := \frac{g(r^{k}x)-\beta(r^{k}x)\cdot b_{k}}{r^{k\ov{\alpha}}}, \\
\varphi_{k}(x) &:= \frac{\varphi(r^{k}x)}{r^{k}}\left(=\varphi_{\frac{1}{r^{k}}\Omega}\right).
\end{align*}
 In order to apply Proposition \ref{first step}, we now verify its hypotheses for $v_{k}$. We have $\norm{v_{k}}_{L^\infty} \leq 1$ by the induction hypothesis. Since $r^{k\ov{\alpha}}\leq 1$, \ref{a2} yields
\begin{equation*}
\Phi(r^{k}x, r^{k\ov{\alpha}})\geq\frac{1}{M}(r^{k\ov{\alpha}})^{s(\Phi)}\Phi(r^{k}x, 1)=\frac{1}{M}r^{k\ov{\alpha}s(\Phi)}.   
\end{equation*}
Using this bound, we can estimate the source term as
\begin{equation} \label{f_k bound}
    \norm{f_{k}}_{L^{\infty}} \leq M r^{k(1-\ov{\alpha}(1+s(\Phi)-\gamma))} \norm{f}_{L^\infty}.
\end{equation}
Moreover, the induction hypothesis $|b_{j}-b_{j+1}| \leq \ov{C_{e}}r^{j\ov{\alpha}}$ implies
\begin{equation} \label{estimate of b_k}
|b_{k}|\leq\frac{\ov{C_{e}}}{1-r^{\ov{\alpha}}}.   
\end{equation}
Combining this with $|q| \leq \frac{\ov{C_{e}}}{1-r^{\ov{\alpha}}}$, we can bound $q_k$ by
\begin{equation*}
|q_k| = \frac{|q-b_{k}|}{r^{k\ov{\alpha}}} \leq \frac{|q| + |b_k|}{r^{k\ov{\alpha}}} \leq \frac{2\ov{C_{e}}}{r^{k\ov{\alpha}}(1-r^{\ov{\alpha}})}.
\end{equation*}
Consequently, combining this bound with \eqref{f_k bound} gives
\begin{align*}
(1+|q_k|^{(\gamma-i(\Phi))_+})\norm{f_k}_{L^\infty} &\leq  2\left(\frac{2\ov{C_{e}}}{r^{k\ov{\alpha}}(1-r^{\ov{\alpha}})}\right)^{(\gamma-i(\Phi))_{+}} M r^{k(1-\ov{\alpha}(1+s(\Phi)-\gamma))} \norm{f}_{L^\infty}\\
&\leq \frac{4\ov{C_{e}}M}{1-(1/2)^{\ov{\alpha}}} r^{k(1-\ov{\alpha}(1+s(\Phi)-\gamma+(\gamma-i(\Phi))_+))}  \norm{f}_{L^\infty} \\
&= \frac{4\ov{C_{e}}M}{1-(1/2)^{\ov{\alpha}}}r^{k(1-\ov{\alpha}(1+s(\Phi)-\min(i(\Phi), \gamma)))}  \norm{f}_{L^\infty}  \\
&\leq \frac{4\ov{C_{e}}M}{1-(1/2)^{\ov{\alpha}}}\norm{f}_{L^\infty} \leq \epsilon,
\end{align*}
where we used the identity $1+s(\Phi)-\gamma+(\gamma-i(\Phi))_{+} = 1+s(\Phi)-\min(i(\Phi), \gamma)$, and the second-to-last inequality follows from our choice of $\ov{\alpha}$. Regarding the boundary data, $g(0)=0$ and $\beta(0)\cdot b_{k}=0$ together with \eqref{estimate of b_k} yield
\begin{align*}
|g_{k}(x)| &= \frac{|g(r^{k}x)-g(0)-(\beta(r^{k}x)-\beta(0))\cdot b_{k}|}{r^{k\ov{\alpha}}} \\ 
&\leq  \frac{[g]_{C^{\alpha}}|r^{k}x|^{\alpha}+[\beta]_{C^{\alpha}}|r^{k}x|^{\alpha}|b_{k}|}{r^{k\ov{\alpha}}} \\
&\leq [g]_{C^{\alpha}}+[\beta]_{C^{\alpha}}\frac{\ov{C_{e}}}{1-r^{\ov{\alpha}}} \leq \epsilon.
\end{align*}
It is also immediate that $[\beta_{k}]_{C^{\alpha}}\leq r^{k\alpha}[\beta]_{C^{\alpha}}\leq\epsilon$ and $[\varphi_{k}]_{C^{1}} \leq [\varphi]_{C^{1}}\leq \epsilon$.

Thus, all hypotheses of Proposition \ref{first step} are satisfied for $v_k$. Hence, there exists a linear function $\Tilde{l}(x)=\Tilde{b}\cdot x$ such that
\begin{equation*}
\norm{v_{k}-\Tilde{l}}_{L^{\infty}\left(\left(\frac{1}{r^{k}}\Omega\right)_{r}\right)}\leq r^{1+\ov{\alpha}}, \quad \beta_{k}(0)\cdot \Tilde{b} = \beta(0)\cdot \Tilde{b} = 0, \quad \text{and} \quad |\Tilde{b}|\leq \ov{C_{e}}.    
\end{equation*}
Setting 
\begin{equation*}
l_{k+1}(x) := l_{k}(x)+r^{k(1+\ov{\alpha})}\Tilde{l}(x/r^{k}),
\end{equation*}
we verify that \eqref{induction} holds for $k+1$. This completes the induction step, and the theorem follows.
\end{proof}

\vspace{0.2cm}

{\bf Acknowledgments.} S. Byun was supported by Mid-Career Bridging Program through Seoul National University. H. Kim and S. Kim were supported the National Research Foundation of Korea grant funded by the Korean Government
(NRF-2022R1A2C1009312).

\section*{Data Availability}
Data sharing not applicable to this article as no datasets were generated or analysed during the current study.

\section*{Declarations}

\subsection*{Conflicts of Interest}
The authors have no conflicts of interest to declare that are relevant to the content of this article.

\bibliographystyle{amsplain}
\bibliography{ref}

@article {IS2013,
    AUTHOR = {Imbert, C. and Silvestre, L.},
     TITLE = {{$C^{1,\alpha}$} regularity of solutions of some degenerate
              fully non-linear elliptic equations},
   JOURNAL = {Adv. Math.},
  FJOURNAL = {Advances in Mathematics},
    VOLUME = {233},
      YEAR = {2013},
     PAGES = {196--206},
      ISSN = {0001-8708,1090-2082},
   MRCLASS = {35J60 (35B65 35D40)},
  MRNUMBER = {2995669},
MRREVIEWER = {Barbara\ Brandolini},
       DOI = {10.1016/j.aim.2012.07.033},
       URL = {https://doi.org/10.1016/j.aim.2012.07.033},
}

@article {BV2022,
    AUTHOR = {Banerjee, Agnid and Verma, Ram Baran},
     TITLE = {{$C^{1,\alpha}$} regularity for degenerate fully nonlinear
              elliptic equations with {N}eumann boundary conditions},
   JOURNAL = {Potential Anal.},
  FJOURNAL = {Potential Analysis. An International Journal Devoted to the
              Interactions between Potential Theory, Probability Theory,
              Geometry and Functional Analysis},
    VOLUME = {57},
      YEAR = {2022},
    NUMBER = {3},
     PAGES = {327--365},
      ISSN = {0926-2601,1572-929X},
   MRCLASS = {35J60 (35D40)},
  MRNUMBER = {4482106},
MRREVIEWER = {Chao\ Zhang},
       DOI = {10.1007/s11118-021-09918-z},
       URL = {https://doi.org/10.1007/s11118-021-09918-z},
}

@article {P2008,
    AUTHOR = {Patrizi, Stefania},
     TITLE = {The {N}eumann problem for singular fully nonlinear operators},
   JOURNAL = {J. Math. Pures Appl. (9)},
  FJOURNAL = {Journal de Math\'{e}matiques Pures et Appliqu\'{e}es.
              Neuvi\`eme S\'{e}rie},
    VOLUME = {90},
      YEAR = {2008},
    NUMBER = {3},
     PAGES = {286--311},
      ISSN = {0021-7824},
   MRCLASS = {35J60 (35B25 35B50)},
  MRNUMBER = {2446081},
MRREVIEWER = {Kai\ Seng\ Chou},
       DOI = {10.1016/j.matpur.2008.04.007},
       URL = {https://doi.org/10.1016/j.matpur.2008.04.007},
}

@article {BBLL24a,
    AUTHOR = {Baasandorj, Sumiya and Byun, Sun-Sig and Lee, Ki-Ahm and Lee,
              Se-Chan},
     TITLE = {{$C^{1,\alpha}$}-regularity for a class of degenerate/singular
              fully non-linear elliptic equations},
   JOURNAL = {Interfaces Free Bound.},
  FJOURNAL = {Interfaces and Free Boundaries. Mathematical Analysis,
              Computation and Applications},
    VOLUME = {26},
      YEAR = {2024},
    NUMBER = {2},
     PAGES = {189--215},
      ISSN = {1463-9963,1463-9971},
   MRCLASS = {35B65 (35D40 35J60 35J70)},
  MRNUMBER = {4733905},
MRREVIEWER = {Zhijiun\ Zhang},
       DOI = {10.4171/ifb/496},
       URL = {https://doi.org/10.4171/ifb/496},
}

@article {IS2016,
    AUTHOR = {Imbert, Cyril and Silvestre, Luis},
     TITLE = {Estimates on elliptic equations that hold only where the
              gradient is large},
   JOURNAL = {J. Eur. Math. Soc. (JEMS)},
  FJOURNAL = {Journal of the European Mathematical Society (JEMS)},
    VOLUME = {18},
      YEAR = {2016},
    NUMBER = {6},
     PAGES = {1321--1338},
      ISSN = {1435-9855,1435-9863},
   MRCLASS = {35J60 (35B45 35B65 35D40 35J70)},
  MRNUMBER = {3500837},
MRREVIEWER = {Barbara\ Brandolini},
       DOI = {10.4171/JEMS/614},
       URL = {https://doi.org/10.4171/JEMS/614},
}

@article {BBLL24b,
    AUTHOR = {Baasandorj, Sumiya and Byun, Sun-Sig and Lee, Ki-Ahm and Lee,
              Se-Chan},
     TITLE = {Global regularity results for a class of singular/degenerate
              fully nonlinear elliptic equations},
   JOURNAL = {Math. Z.},
  FJOURNAL = {Mathematische Zeitschrift},
    VOLUME = {306},
      YEAR = {2024},
    NUMBER = {1},
     PAGES = {Paper No. 1, 26},
      ISSN = {0025-5874,1432-1823},
   MRCLASS = {35B65 (35D40 35J60 35J70)},
  MRNUMBER = {4670092},
MRREVIEWER = {Dami\~ao\ Ara\'ujo},
       DOI = {10.1007/s00209-023-03400-9},
       URL = {https://doi.org/10.1007/s00209-023-03400-9},
}

@article {LZ2018,
    AUTHOR = {Li, Dongsheng and Zhang, Kai},
     TITLE = {Regularity for fully nonlinear elliptic equations with oblique
              boundary conditions},
   JOURNAL = {Arch. Ration. Mech. Anal.},
  FJOURNAL = {Archive for Rational Mechanics and Analysis},
    VOLUME = {228},
      YEAR = {2018},
    NUMBER = {3},
     PAGES = {923--967},
      ISSN = {0003-9527,1432-0673},
   MRCLASS = {35J60 (35B65 35D40 35J25 49L25)},
  MRNUMBER = {3780142},
MRREVIEWER = {Tilak\ Bhattacharya},
       DOI = {10.1007/s00205-017-1209-x},
       URL = {https://doi.org/10.1007/s00205-017-1209-x},
}

@article {MS2006,
    AUTHOR = {Milakis, Emmanouil and Silvestre, Luis E.},
     TITLE = {Regularity for fully nonlinear elliptic equations with
              {N}eumann boundary data},
   JOURNAL = {Comm. Partial Differential Equations},
  FJOURNAL = {Communications in Partial Differential Equations},
    VOLUME = {31},
      YEAR = {2006},
    NUMBER = {7-9},
     PAGES = {1227--1252},
      ISSN = {0360-5302,1532-4133},
   MRCLASS = {35J60 (35B65)},
  MRNUMBER = {2254613},
MRREVIEWER = {Eduardo\ V.\ Teixeira},
       DOI = {10.1080/03605300600634999},
       URL = {https://doi.org/10.1080/03605300600634999},
}

@article {BKO25,
    AUTHOR = {Byun, Sun-Sig and Kim, Hongsoo and Oh, Jehan},
     TITLE = {{$C^{1,\alpha }$} regularity for degenerate fully nonlinear
              elliptic equations with oblique boundary conditions on {$C^1$}
              domains},
   JOURNAL = {Calc. Var. Partial Differential Equations},
  FJOURNAL = {Calculus of Variations and Partial Differential Equations},
    VOLUME = {64},
      YEAR = {2025},
    NUMBER = {5},
     PAGES = {Paper No. 174, 20},
      ISSN = {0944-2669,1432-0835},
   MRCLASS = {35J60 (35B65 35D40 35J25 35J70)},
  MRNUMBER = {4913059},
MRREVIEWER = {Qi\ Li},
       DOI = {10.1007/s00526-025-03042-1},
       URL = {https://doi.org/10.1007/s00526-025-03042-1},
}

@article {BRS26,
    AUTHOR = {Bessa, Junior da S. and Ricarte, Gleydson C. and Silva, Paulo
              H. da C.},
     TITLE = {Optimal gradient regularity to degenerate fully nonlinear
              elliptic models with oblique boundary condition},
   JOURNAL = {Nonlinear Anal.},
  FJOURNAL = {Nonlinear Analysis. Theory, Methods \& Applications. An
              International Multidisciplinary Journal},
    VOLUME = {262},
      YEAR = {2026},
     PAGES = {Paper No. 113919, 16},
      ISSN = {0362-546X,1873-5215},
   MRCLASS = {35J60 (35B65 35J15 35J25 35R35)},
  MRNUMBER = {4948058},
       DOI = {10.1016/j.na.2025.113919},
       URL = {https://doi.org/10.1016/j.na.2025.113919},
}

@article {R20,
    AUTHOR = {Ricarte, Gleydson C.},
     TITLE = {Optimal {$C^{1,\alpha}$} regularity for degenerate fully
              nonlinear elliptic equations with {N}eumann boundary
              condition},
   JOURNAL = {Nonlinear Anal.},
  FJOURNAL = {Nonlinear Analysis. Theory, Methods \& Applications. An
              International Multidisciplinary Journal},
    VOLUME = {198},
      YEAR = {2020},
     PAGES = {111867, 13},
      ISSN = {0362-546X,1873-5215},
   MRCLASS = {35J60 (35B65 35J75 35R35)},
  MRNUMBER = {4081861},
       DOI = {10.1016/j.na.2020.111867},
       URL = {https://doi.org/10.1016/j.na.2020.111867},
}

@article {BDF22,
    AUTHOR = {Birindelli, Isabeau and Demengel, Fran\c coise and Leoni,
              Fabiana},
     TITLE = {Mixed boundary value problems for fully nonlinear degenerate
              or singular equations},
   JOURNAL = {Nonlinear Anal.},
  FJOURNAL = {Nonlinear Analysis. Theory, Methods \& Applications. An
              International Multidisciplinary Journal},
    VOLUME = {223},
      YEAR = {2022},
     PAGES = {Paper No. 113006, 22},
      ISSN = {0362-546X,1873-5215},
   MRCLASS = {35J66 (35J70 35J75)},
  MRNUMBER = {4438232},
       DOI = {10.1016/j.na.2022.113006},
       URL = {https://doi.org/10.1016/j.na.2022.113006},
}

@article{BO26,
      title={Optimal {$C^{1,\alpha}$}  regularity up to the boundary for fully nonlinear elliptic equations with double phase degeneracy}, 
      author={Junior da Silva Bessa and Jehan Oh},
      year={2026},
      JOURNAL={arXiv preprint},
      pages = {2604.04776},
      primaryClass={math.AP},
      url={https://arxiv.org/abs/2604.04776}, 
}

\end{document}